\documentclass[12pt]{amsart}

\usepackage{amsfonts,amsmath,amsthm,amssymb,amsxtra}
\usepackage{geometry}
\geometry{margin=1in,letterpaper}

\usepackage{color}
\definecolor{greenbean}{RGB}{199,237,204}

\usepackage{tikz}
\usetikzlibrary{calc}

\usepackage{multicol}

\usepackage[colorlinks=true, bookmarks=true]{hyperref}

\usepackage[english]{babel}
\usepackage{indentfirst,bm}
\usepackage{latexsym}

\usepackage{graphicx}
\usepackage[tight]{subfigure}

\setlength{\parindent}{2em} 

\theoremstyle{plain}
\newtheorem{thm}{Theorem}[section]
\newtheorem{lemma}[thm]{Lemma}
\newtheorem{prop}[thm]{Proposition}
\newtheorem{cor}[thm]{Corollary}
\newtheorem{eg}[thm]{Example}

\theoremstyle{definition}
\newtheorem{Def}[thm]{Definition}

\theoremstyle{remark}
\newtheorem{remark}[thm]{Remark}

\newcommand{\A}{\mathcal{A}}

\newcommand{\MA}{\mathcal{M}_{\mathcal{A}}}

\newcommand{\Bigp}[1]{ \Big( #1\Big )}
\newcommand{\notion}[1]{{\it #1}}

\title[Classification of 9 Projective Lines]{Classification of Moduli Spaces of Arrangements of 9 Projective Lines}
\author{Fei Ye}
\address{Emmy Noether Research Institute,
 Department of Mathematics, Bar-Ilan University, Ramat Gan, 52900, Israel.}
\email{fye@macs.biu.ac.il}
\date{}

\thanks{This work was partially supported by the Oswald Veblen Fund and by the Minerva Foundation of Germany.} 

\subjclass[2000]{14N20, 32S22, 52C35}

\keywords{Line arrangements, moduli spaces.}

\begin{document}
\begin{abstract}In the study of line arrangements, searching for minimal examples of line arrangements whose fundamental groups are not  combinatorially invariant is a very interesting and hard problem. It was known that such a minimal arrangement must have at least 9 lines. In this paper, we extend the number to 10 by a new method. We classify arrangements of 9 projective lines according to the irreducibility of their moduli spaces and show that fundamental groups of complements of arrangements of 9 projective lines are combinatorially invariant.  The idea and results have been used to classify arrangements of 10 projective lines.
\end{abstract}

\maketitle

\section{Introduction}
A {\it hyperplane arrangement} $\mathcal{A}=\{L_1, L_2, \dots, L_n\}$  in  $\mathbb{CP}^r$ is a finite collection of hyperplanes.  We call $M(\mathcal{A})=\mathbb{CP}^r\setminus (\bigcup\limits_{L\in \mathcal{A}}L)$ the complement of $\mathcal{A}$. The set $L(\mathcal{A})=\{\bigcap\limits_{i\in S}L_i | S\subseteq\{1, 2, \dots, n\}\}$ partially ordered by reverse inclusion is called the \notion{intersection lattice} of $\mathcal{A}$. Let $\mathcal{A}_1$ and $\mathcal{A}_2$ be  two arrangements of $n$ hyperplanes. We say that intersection lattices $L(\mathcal{A}_1)$ and $L(\mathcal{A}_2)$ are isomorphic, denoted by $L(\mathcal{A}_1)\sim L(\mathcal{A}_2)$,  if there is a permutation $\phi$ of the set $\{1, 2, \dots, n \}$ such that 
\[\dim \Big( \bigcap\limits_{\substack{i\in S \\ G_i\in \mathcal{A}_1}}G_i \Big) =\dim \Bigp{ \bigcap\limits_{\substack{j\in \phi(S)\\ H_j\in \mathcal{A}_2}}H_j }\]
 for any nonempty subset $S\subseteq\{1,2,\dots, n\}$. Two arrangements are \notion{lattice isomorphic} if their lattices are isomorphic. In this paper, we only consider line arrangements in $\mathbb{CP}^2$.

An essential topic in hyperplane arrangements theory is to study the interaction between topology of complements and combinatorics of intersection lattices.  Naturally enough, one may ask how close topology and combinatorics of a given arrangement are related.
Two arrangements $\mathcal{A}_1$ and $\mathcal{A}_2$ are \notion{homeomorphic equivalent} if there is a homeomorphism between  their complements.   A more concrete question is how close lattice isomorphism and homeomorphic equivalence are being a one-to-one correspondence. 

The deepest theorem in the theory of arrangement of lines in projective space of dimension 2 is the Jiang-Yau Theorem \cite{Jiang1998} which asserts that the intersection lattice of the line arrangement is a topological invariant. It is natural to ask to what extent the converse of Jiang-Yau Theorem is true. Jiang-Yau \cite{Jiang1994} and subsequently Wang-Yau \cite{Wang2005} have shown that the converse statement is also true for a large class of line arrangements. Therefore Jiang-Yau Theorem \cite{Jiang1998} initiates a new research direction: Can one find a Zariski pair of line arrangements, i.e. a pair of arrangements which are lattice isomorphic but not homeomorphic equivalent. 

A pair of arrangements which are lattice isomorphism but not homeomorphic equivalent is called a \notion{Zariski pair}. Our definition is stronger than the definition introduced by Artal in \cite{Artal-Bartolo1994}, which we shall call \notion{weak Zariski pairs} (see \cite{Artal-Bartolo2008} for a survey on Zariski pairs). %
The first Zariski pair of arrangements was constructed by Rybnikov \cite{Rybnikov2011}. Each arrangement in Rybinikov's example consists of 13 lines and 15 triple points.   Latter, in \cite{ArtalBartolo2005}, the authors provide another (weak) Zariski pair of two arrangements $\mathcal{H}^+:=\mathcal{C}^+\cup\{N^+\}$ and $\mathcal{H}^-:\cup\{N^-\}$, where $\mathcal{C}^{\pm}$ are arrangements  (Figure \ref{fig:extended-FS}) extending Falk-Sturmfels arrangements (Figure \ref{fig:Falk-Sturmfels}), and $N^{\pm}$ are lines passing through a triple point and a double point of $\mathcal{C}^{\pm}$. The proof is based on the observation that there is no order-preserving homeomorphism between $(\mathbb{P}^2, \mathcal{C}^+)$ and $(\mathbb{P}^2, \mathcal{C}^-)$.  On the contrary direction,  Garber, Teicher and Vishne \cite{Garber2003} proved that there is no Zariski pair of arrangement of upto 8 real lines which covered the result of Fan \cite{Fan1997} on arrangements of 6 lines. This result was recently generalized to arrangements of 8 complex lines by Nazir and Yoshinaga
 \cite{Nazir2010}.  
 
 A natural question is what is the minimal number of lines of a Zariski pair of line arrangements. 

On the other hand, it was Jiang and Yau \cite{Jiang1994} who first observe that the statement: two lattice isotopy line arrangements (i.e. they are connected by a one-parameter family with constant intersection lattice) have diffeomorphic complements, follows trivially from Teissier's numerical characterization of Whitney condition. In \cite{Jiang1994} and \cite{Wang2005}, the authors found large classes of line arrangements, called \notion{nice arrangements} and \notion{simple arrangements} respectively, whose intersection lattices determine topology of the complements.  Nazir-Yoshinaga \cite{Nazir2010} found new classes of line arrangements whose intersection lattices determine  the topology of the complements.  Unlike nice and simple arrangements whose intersection lattices have special properties, Nazir and Yashinaga's new classes require that all intersection points with multiplicity at least $3$ are in special positions. It makes Nazir and Yashinaga's results more practical to study arrangements of fewer lines. Indeed, in \cite{Nazir2010}, Nazir and Yoshinaga classify arrangements of 8 lines and present a list of classification of arrangements of 9 lines.   
 
 In this paper, we introduce new ideas to classify arrangements of lines. We prove that Nazir and Yoshinaga's list on the classification of arrangements of 9 lines is complete. As a corollary, we conclude that there is no Zariski pair of arrangements of $9$ lines.   The idea and results of this paper have been used to classify moduli spaces of arrangements of 10 projective lines (see \cite{Amram2012}).
 
The paper is organized as following. In section 1, we recall some results in Nazir and Yoshinaga's paper. In section 2, we prove  the list of the classification of arrangements of 9 lines by Nazir and Yoshinaga is complete. In section 4, we consider the example of arrangements of 10 lines $\mathcal{C}^\pm$ and give an explicit diffeomorphism between the complements $M(\mathcal{C}^\pm)$.

\paragraph{Acknowledgements:}The author is grateful to  M. Amram,  M. Cohen, M. Eliyahu, D. Garber, M. Teicher, E. Artal Bartolo,
 and J.I. Cogolludo Agust\'in for helpful conversations and comments, and especially to D. Garber for comments on a draft of this manuscript.   

\section{Simple $C_{\leq 3}$ line arrangements}

Consider the dual space $(\mathbb{CP}^2)^*$ of the projective space $\mathbb{CP}^2$. A line arrangement $\mathcal{A}=\{L_1, L_2,\dots, L_n\}$ can be viewed as an $n$-tuple of points $(L^*_1, L^*_2,\dots, L^*_n)$ in the product of the dual spaces $((\mathbb{CP}^2)^*)^n$. We define
the moduli space of arrangements with the fixed lattice $L(\mathcal{A})$ as 
\[\mathcal{M}_\mathcal{A}=\{\mathcal{B}\in ((\mathbb{CP}^2)^*)^n | L(\mathcal{B})= L(\mathcal{A})\}\Big/\text{PGL}_3(\mathbb{C})\subseteq ((\mathbb{CP}^2)^*)^n\Big/\text{PGL}_3(\mathbb{C}).\] 
We note that our moduli space $\mathcal{M}_{\mathcal{A}}$ is called an ordered moduli space in \cite{ArtalBartolo2005}. We say a singular point $P$ of $L_1\cup L_2\cup \cdots \cup L_n$ is a \notion{multiple point} of $\mathcal{A}$ if the multiplicity of $P$ is at least $3$.

The following definition is a combination of Nazir and Yoshinaga's original definitions of $C_1$, $C_2$ and simple $C_3$ arrangements.
\begin{Def}
A line arrangement is call $C_{\leq 3}$ if all the multiple points are on at most three lines, say $L_1$, $L_2$ and $L_3$. A  line arrangement is called simple $C_{\leq 3}$ if it is $C_{\leq 3}$ and one of the following condition holds: 
\begin{enumerate}
\item  $L_1\cap L_2\cap L_3\neq \emptyset$ or, 
\item one of $L_1$, $L_2$ and $L_3$ contains at most one more multiple point apart of the possible multiple points $L_1\cap L_2$, $L_2\cap L_3$, and $L_1\cap L_3$.  
\end{enumerate} 
\end{Def}

Here are some examples of arrangements which are not simple $C_{\leq 3}$.

\begin{eg}[MacLane arrangements]\label{MacLane}
A {\em MacLane arrangement} (see Figure \ref{fig:MacLane}) consists of eight lines and eight triple points such that each line passes through  exactly  three triple points. It is not hard to check that the moduli space of MacLane arrangements consists of two points. Representatives of the two points can be defined by the following equations:
\[\scriptstyle xy(x-z)(y-z)(x-y)\Big(x-\dfrac{1\pm\sqrt{-3}}{2}z\Big)\Big(y-\dfrac{1\pm\sqrt{-3}}{2}z\Big)\Big(\dfrac{-1\pm\sqrt{-3}}{2}x-y+z\Big)=0.\]

Since each line passes through three triple points, there are at most seven triple points on three lines. Thus MacLane arrangements can not be simple $C_{\leq 3}$. 
\begin{figure}[htbp]
\centering
\begin{tikzpicture}[domain=4:-2, scale=0.6]
\draw plot (\x, 0) node[left]{$L_1$};
\draw plot (\x, 1) node[left]{$L_2$};
\draw plot (\x, 2) node[left]{$L_3$};

\draw[domain=-2:4] plot (0, \x) node[above]{$L_4$};
\draw[domain=-2:4] plot (1, \x) node[above]{$L_5$};
\draw[domain=-2:4] plot (2, \x) node[above]{$L_6$};

\draw [domain=-2:4] plot (\x, \x) node[right]{$L_7$};

\draw[dashed] (-2, -1)--(-0.5, 0.5);
\draw (-0.5, 0.5)--(1, 2);
\draw (1, 2)--(1.4, 1.2);
\draw[dashed] (1.4, 1.2)-- (1.8,0.4);
\draw (1.8,0.4)--(2.2,-0.4);

\draw (-2,-1) node[left]{$L_8$};
\end{tikzpicture}

\caption{\label{fig:MacLane} MacLane arrangement}
\end{figure}
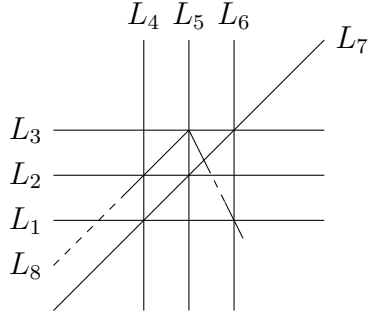
\end{eg}

\begin{eg}[Falk-Sturmfels arrangements (see \cite{Nazir2010} Example 5.2)]\label{Falk-Sturmfels}
Denote by $\gamma_{\pm}=\dfrac{1\pm\sqrt{5}}{2}$ the roots of $x^2-x-1=0$. Let 
\[FS^{\pm}=\{L_i^{\pm}, K_i^{\pm}, H_9^{\pm}, i=1,2,3,4\}\]
be arrangements of nine lines in $\mathbb{P}^2$, where the lines are defined as follows:
 \[\scriptstyle
\begin{array}{llll}
L_1^{\pm} : x=0,  &    
L_2^{\pm} : x=\gamma_\pm(y-z),   & L_3^{\pm} : y=z, & L_4^{\pm} : x+y=z, \\

K_1^{\pm} : x=z, & K_2^{\pm} : x=\gamma_\pm y,  & K_3^{\pm} : y=0, & K_4^{\pm} : x+y=(\gamma_\pm+1)z, \\

 H_9^\pm : z=0. &  &  & 
\end{array}
\]

Arrangements $FS^\pm$ are called {\em Falk-Sturmfels arrangements} (see Figure \ref{fig:Falk-Sturmfels}).  It is known (see for instance, Example 5.2 in \cite{Nazir2010}) that  the moduli space $\mathcal{M}_{L(FS^\pm)}$  consists of 2 points  $\{FS^+, FS^-\}$. 
\begin{figure}[htbp]
\centering
\begin{tikzpicture}[domain=-2: 4, scale=0.7]
\draw[domain=4:-2] plot (\x, 0) node[left]{$K_3^+$};
\draw[domain=4:-2] plot (\x, 1) node[left]{$L_3^+$};
\draw plot (\x, {2/(1+sqrt(5))*\x+1}) node[right]{$L_2^+$};
\draw plot (0, \x) node[above]{$L_1^+$};
\draw plot (1, \x) node[above]{$K_1^+$};
\draw plot ( \x, {2/(1+sqrt(5))*\x}) node[right]{$K_2^+$};
\draw[domain=3:-2] plot (\x, 1-\x) node[left]{$L_4^+$};
\draw[domain=4:(-5+sqrt(5))/2] plot (\x, {(3+sqrt(5))/2-\x})  node[left]{$K_4^+$};
\draw (1, -2.7) node[below]{$FS^+$};
\end{tikzpicture}
\hspace{2em}
\begin{tikzpicture}[domain=-3: 3, scale=0.7]
\draw[domain=3: -3]
 plot (\x, 0) node[left]{$K_3^-$};
\draw[domain=3: -3] plot (\x, 1) node[left]{$L_3^-$};
\draw[domain=-2*(1-sqrt(5)):(1-sqrt(5))] plot (\x, {2/(1-sqrt(5))*\x+1}) node[above]{$L_2^-$};
\draw plot (0, \x) node[above]{$L_1^-$};
\draw plot (1, \x) node[above]{$K_1^-$};
\draw[domain=(3*sqrt(5)-3)/2: (3-3*sqrt(5))/2] plot ( \x, {2/(1-sqrt(5))*\x}) ;
\draw plot ( {(3*sqrt(5) -3)/2}, -3) node[below]{$K_2^-$};
\draw[domain=-2:3] plot (\x, 1-\x) node[right]{$L_4^-$};
\draw[domain=3:(-3-sqrt(5))/2] plot (\x, {(3-sqrt(5))/2-\x})  node[left]{$K_4^-$};
\draw (0, -3.7) node[below]{$FS^-$};
\end{tikzpicture}
\caption{\label{fig:Falk-Sturmfels}Falk-Sturmfels arrangements}
\end{figure}
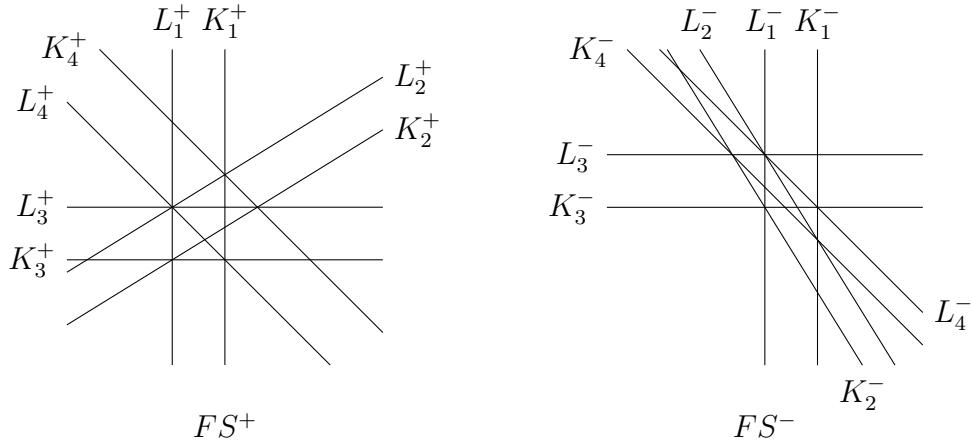
Falk-Sturmfels arrangements are the arrangements of nine lines with one quadruple point, eight triple points, and that one of the lines passes through four triple points.
\end{eg}

\begin{eg}[$\mathcal{A}^{\pm\sqrt{-1}}$]\label{A+-}
Arrangements $\mathcal{A}^{\pm\sqrt{-1}}$ defined by Nazir and Yoshinaga (see \cite{Nazir2010} Example 5.3) consists of nine lines and ten triple points such that there are three lines which do not  intersect at a point and have four triple points on each. Moreover, each of the rest six lines contains exactly three triple points. Those arrangements (see Figure \ref{fig:A+-}) can be defined by following equations:
\[\scriptstyle xy(x-z)(y-z)(x\mp\sqrt{-1}z)(y\mp\sqrt{-1}z)(x-y)((\pm\sqrt{-1}-1)x\pm\sqrt{-1}y+z)((1\mp\sqrt{-1})x+y-z)=0.\]
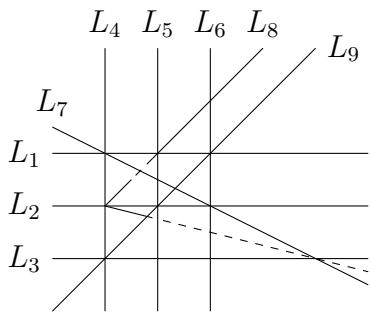
\begin{figure}[htbp]
\centering
\begin{tikzpicture}[domain=5:-1, scale=0.7]
\draw plot (\x, 0) node[left]{$L_3$};
\draw plot (\x, 1) node[left]{$L_2$};
\draw plot (\x, 2) node[left]{$L_1$};
\draw[domain=-1:4] plot (0,\x) node[above]{$L_4$};
\draw[domain=-1:4] plot (1, \x) node[above]{$L_5$};
\draw[domain=-1:4] plot (2, \x) node[above]{$L_6$};
\draw[domain=-1:4] plot (\x, \x) node[right]{$L_9$};
\draw plot (\x, {-1/2*\x+2})  node[above]{$L_7$};
\draw (0.9, 1.9)--(3, 4);
\draw (3,4) node[above]{$L_8$};
\draw[dashed] (0.9, 1.9)--(0.5, 1.5);
\draw (0.5, 1.5)--(0, 1);
\draw[dashed] (1.1, {1-1.1/4})--(5, -1/4);
\draw (0,1)--(0.9, {1-0.9/4});
\end{tikzpicture}
\caption{\label{fig:A+-} $\mathcal{A}^{\pm\sqrt{-1}}$}
\end{figure}
\end{eg}

\begin{lemma}[\cite{Nazir2010} Lemma 4.4]\label{lemma:non-simple}
If a line arrangement is not simple $C_{\leq 3}$, then it has 6 lines, $L_1$, $L_2$, $\dots$, $L_6$, such that $L_1\cap L_2\cap L_3\neq\emptyset$, $L_4\cap L_5\cap L_6\neq\emptyset$ and $(L_1\cup L_2\cup L_3)\cap (L_4\cup L_5\cup L_6)$ consists of 9 distinct double points. 
\end{lemma}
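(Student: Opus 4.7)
The hypothesis that the arrangement is not simple $C_{\leq 3}$ splits into two cases: either \textbf{(a)} the arrangement is not $C_{\leq 3}$, meaning no three lines of the arrangement cover all multiple points; or \textbf{(b)} the arrangement is $C_{\leq 3}$ via some triple $L_1, L_2, L_3$, but $L_1\cap L_2\cap L_3=\emptyset$ and each $L_i$ carries at least two multiple points distinct from the pairwise intersections $L_i\cap L_j$. The plan is to produce the desired six lines in each case.

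In case \textbf{(a)}, I pick any multiple point $P$ and any three lines $L_1,L_2,L_3$ of the arrangement through $P$. Since these three lines do not cover every multiple point, some multiple point $Q$ lies outside $L_1\cup L_2\cup L_3$. Choosing three lines $L_4,L_5,L_6$ through $Q$ yields a second concurrent triple; the six lines are automatically distinct because $Q\notin L_1\cup L_2\cup L_3$. A short argument shows the nine cross-intersections $L_i\cap L_j$ for $i\in\{1,2,3\}$, $j\in\{4,5,6\}$ are pairwise distinct: any coincidence would force two lines concurrent at $P$ to also meet outside $P$, or else force a single point to equal both $P$ and $Q$.

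In case \textbf{(b)}, I pick two multiple points $P\in L_1$ and $Q\in L_2$, both off the pairwise intersections, which exist by hypothesis. Through each I extract a concurrent triple $\{L_1,M_1,M_2\}$ and $\{L_2,N_1,N_2\}$; the extra lines $M_i,N_j$ are necessarily distinct from $L_1,L_2,L_3$, because $P\notin L_2\cup L_3$ and $Q\notin L_1\cup L_3$. If the line through $P$ and $Q$ happens to lie in the arrangement and forces an overlap $M_i=N_j$, the second multiple point on $L_1$ guaranteed by hypothesis can be used to reselect $P$ so that this problem disappears.

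The main obstacle, common to both cases, is to guarantee that the nine cross-intersection points are all \emph{double} points of the arrangement, that is, no further line of the arrangement passes through any of them. My strategy is iterative: if some intersection $R=L_i\cap L_j$ is itself a multiple point, then $R$ is a center with at least three lines through it, and one restarts the argument with $R$ playing the role of $Q$. Since the arrangement has only finitely many multiple points, and each restart can be arranged to strictly decrease the number of multiple cross-intersections, the process terminates with a configuration of six lines whose nine cross-intersections are all double, as required.
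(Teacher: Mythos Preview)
First, note that the paper does not supply its own proof of this lemma: it is quoted from \cite{Nazir2010} and used as a black box. So there is no in-paper argument to compare against; I can only assess your proposal on its own terms and against how the lemma is \emph{used} later in the paper.

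Your proposal contains a substantive misreading of the statement. The phrase ``9 distinct double points'' refers to double points of the six-line sub-arrangement $\{L_1,\dots,L_6\}$, not of the ambient arrangement $\mathcal{A}$. This is confirmed by the sentence immediately following the lemma, which defines $\mathcal{A}_s$ as the arrangement with ``two triple points \dots\ and nine double points $Q_{ij}$'', and---decisively---by how the $Q_{ij}$ are treated later: in the proof of Proposition~\ref{prop:r=4} the quadruple point of $\mathcal{A}$ is taken to be one of the $Q_{ij}$, and in the proofs of Lemma~\ref{lemma:10-triples} and Proposition~\ref{prop:r=3} the lines $L_7,L_8,L_9$ are allowed (indeed required) to pass through several of the $Q_{ij}$. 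So the stronger statement you attempt in your final paragraph (that the nine crossings are double in $\mathcal{A}$) is simply false in general, and your iterative ``restart'' scheme is aimed at a non-existent obstacle. What actually has to be shown is only that the nine crossings $L_i\cap L_{j+3}$ are pairwise distinct and avoid the two concurrency points $P$ and $Q$.

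Even for that correct, weaker target, case~(a) has a gap. From $Q\notin L_1\cup L_2\cup L_3$ you correctly get that $L_4,L_5,L_6$ are new lines, but nothing in your construction prevents some $L_{j+3}$ from passing through $P$; if it does, then $L_1\cap L_{j+3}=L_2\cap L_{j+3}=L_3\cap L_{j+3}=P$ and the nine crossings collapse. Your ``short argument'' handles the coincidences forcing $Q\in L_i$ or $P=Q$, but not the symmetric case $P\in L_{j+3}$. This is repairable (there is at most one line of $\mathcal{A}$ through both $P$ and $Q$, and one can choose the triples to avoid or absorb it), but it needs to be said. Case~(b) has a similar looseness: the reselection of $P$ to dodge an overlap $M_i=N_j$ is asserted rather than argued, and you should also check that the two new concurrency points are not on each other's triples.
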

Let $\A_s=\{L_1, L_2,\dots, L_6\}$ be the arrangement which has two triple points  $L_1\cap L_2\cap L_3$ and $L_4\cap L_5\cap L_6$, and nine double points $Q_{ij}=L_i\cap L_{j+3}$, where $i, j\in\{1, 2, 3\}$.

Using Lemma \ref{lemma:non-simple}, one can easily prove that an arrangement of 7 lines is simple $C_{\leq 3}$. It is also not hard to prove the following result.

\begin{prop}[\cite{Nazir2010} Proposition 4.6 ]\label{prop:classification-8-lines}
An arrangement of eight lines is either a simple $C_{\leq 3}$ line arrangement or a MacLane arrangement.
\end{prop}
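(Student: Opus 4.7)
The plan is to invoke Lemma~\ref{lemma:non-simple} and analyze how the remaining two lines sit relative to the resulting sub-arrangement. Label the lines so that $\{L_1,\dots,L_6\}=\A_s$ with $P=L_1\cap L_2\cap L_3$, $Q=L_4\cap L_5\cap L_6$, and $Q_{ij}=L_i\cap L_{j+3}$ the nine distinct double points of $\A_s$; let $L_7,L_8$ be the remaining two lines of $\mc{A}$. I aim to show that if $\mc{A}$ is not simple $C_{\leq 3}$, then it has the combinatorial type of the MacLane arrangement.

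First, I would argue that if either $L_7$ or $L_8$ passes through $P$ or $Q$, say $L_7\ni P$, then $P$ is a quadruple point on $L_1,L_2,L_3,L_7$ and a triple such as $\{L_7,L_j,L_8\}$ for a suitable $j\in\{4,5,6\}$ exhibits simple $C_{\leq 3}$ (condition (2)) after a short verification. Thus one may assume neither $L_7$ nor $L_8$ meets $P$ or $Q$. Let $n_7$ (resp.~$n_8$) denote the number of points $Q_{ij}$ incident to $L_7$ (resp.~$L_8$); since $L_7$ meets each $L_i$ at a single point, $n_7\leq 3$, with equality forcing $L_7$ to pass through a transversal $\{Q_{i,\sigma(i)}:i=1,2,3\}$ for some $\sigma\in S_3$, and similarly for $L_8$ with $\tau\in S_3$.

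The case analysis then runs: (i) if $n_7\leq 1$, choose $L_\ell\in\{L_1,L_2,L_3\}$ and $L_m\in\{L_4,L_5,L_6\}$ so that $\{L_\ell,L_m,L_7\}$ covers every multiple point of $\mc{A}$ and $L_7$ carries at most one multiple point off the triangle vertices, yielding simple $C_{\leq 3}$ by condition (2) (and symmetrically if $n_8\leq 1$); (ii) if $n_7=2$, say $Q_{11},Q_{22}\in L_7$, then $\{L_1,L_4,L_7\}$ is concurrent at $Q_{11}$, and one shows either all multiple points lie on $L_1\cup L_4\cup L_7$ (giving condition (1)) or the uncovered multiple points constrain $L_8$ enough that another triple witnesses simple $C_{\leq 3}$; (iii) if $n_7=n_8=3$, ruling out $\sigma(i)=\tau(i)$ for some $i$ (which would make $Q_{i,\sigma(i)}$ quadruple and reduce to an earlier case) and ruling out $L_7\cap L_8\in\bigcup_{k\leq 6}L_k$ (which would create a ninth triple point and still allow a simple $C_{\leq 3}$ witness by enumeration), one concludes that $\mc{A}$ has exactly 8 triple points with each of the 8 lines on exactly three, so $\mc{A}$ is MacLane.

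The principal obstacle lies in the intermediate case $n_7=2$ and in the exceptional sub-cases of (iii): one must enumerate several sub-configurations and, for each, exploit the rigidity provided by Lemma~\ref{lemma:non-simple} (the nine $Q_{ij}$ being distinct) to produce a distinguished triple of lines witnessing simple $C_{\leq 3}$ whenever the MacLane combinatorics fail.
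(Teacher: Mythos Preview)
The paper does not actually prove this proposition: it is quoted from \cite{Nazir2010} with only the remark that ``it is also not hard to prove'' after observing (via Lemma~\ref{lemma:non-simple}) that every arrangement of seven lines is simple $C_{\leq 3}$. So there is no argument in the paper to compare against beyond the hint that Lemma~\ref{lemma:non-simple} is the starting point, which is precisely your opening move.

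Your overall strategy is the natural one, but case~(i) as written contains a genuine error. You claim that when $n_7\leq 1$ one can choose $L_\ell\in\{L_1,L_2,L_3\}$ and $L_m\in\{L_4,L_5,L_6\}$ so that $\{L_\ell,L_m,L_7\}$ covers every multiple point. This fails when $n_8=3$: the three triple points $Q_{1,\tau(1)},Q_{2,\tau(2)},Q_{3,\tau(3)}$ on $L_8$ lie on three \emph{different} lines among $L_1,L_2,L_3$ and on three different lines among $L_4,L_5,L_6$, so no single $L_\ell$ together with a single $L_m$ can pick them all up, and by hypothesis they are not on $L_7$ either. The fix is to put $L_8$, not $L_7$, into the witnessing triple and then choose $L_\ell,L_m$ to absorb $P$, $Q$, and the possible triple point $L_7\cap L_8\cap L_k$; one of $L_\ell,L_m$ then carries at most one multiple point off the triangle, giving condition~(2). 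Similar care is needed in the sub-cases you already flag as incomplete: in~(iii), when $L_7\cap L_8$ lands on some $L_k$ you have nine triple points, and exhibiting a simple $C_{\leq 3}$ witness is not automatic since $\{L_7,L_8,\,\cdot\,\}$ cannot cover both $P$ and $Q$. Finally, your conclusion in~(iii) that the resulting $8_3$ incidence is MacLane is correct, but it deserves one sentence of justification: after relabelling so that $\sigma=\mathrm{id}$, the condition $\sigma(i)\neq\tau(i)$ forces $\tau$ to be a $3$-cycle, and the two $3$-cycles give lattice-isomorphic arrangements.
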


More generally, Nazir and Yoshinaga proved the following theorem:

\begin{thm}[\cite{Nazir2010} Theorem 3.5]The moduli space $\mathcal{M}_\mathcal{A}$ of simple $C_{\leq 3}$ line arrangements with the fixed intersection lattice $L(\mathcal{A})$  is irreducible. 
\end{thm}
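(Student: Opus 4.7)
The plan is to exhibit $\MA$, after normalization by the $\text{PGL}_3$-action, as an irreducible (in fact rational) variety via an explicit birational parameterization.

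The first step is to put $L_1, L_2, L_3$ into canonical position. In case (1) of the definition, I fix them as three concurrent lines through $[0:0:1]$, for instance $L_1=\{x=0\}$, $L_2=\{y=0\}$, $L_3=\{x=y\}$. In case (2), I fix them as a coordinate triangle and use the residual two-dimensional diagonal torus to additionally fix the extra multiple point on the designated line (if it exists). The remaining $\text{PGL}_3$-stabilizer is at most one-dimensional and connected, so taking its quotient preserves irreducibility.

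Next, for each $i\ge 4$, the lattice $L(\A)$ prescribes a subset $S_i$ of multiple points---all lying on $L_1\cup L_2\cup L_3$---through which $L_i$ must pass. If $|S_i|\ge 2$ then $L_i$ is forced to be the unique line joining any two points of $S_i$, and each further prescribed point imposes a single collinearity equation on the multiple-point coordinates; if $|S_i|\le 1$ then $L_i$ varies in an affine family of dimension $2-|S_i|$ over the base of multiple-point positions. Forgetting the $L_i$ thus exhibits $\MA$ (after normalization) as an iterated affine bundle over a locally closed subvariety $T'$ of a product of projective lines, cut out by finitely many collinearity equations. Since the fibers are connected affine spaces, irreducibility of $\MA$ reduces to irreducibility of $T'$.

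The heart of the proof is to show $T'$ is irreducible, and this is where the simple $C_{\leq 3}$ hypothesis enters. In case (1), the concurrency of $L_1,L_2,L_3$ makes each collinearity equation linear in one of the three multiple-point parameters: taking $s,t,u$ to be the affine positions on $L_1,L_2,L_3$ of the three prescribed points on some $L_i$ (with the normalization above), the collinearity relation is $u(s+t)=st$, which solves rationally for $u$. In case (2), the designated line with at most one extra multiple point plays the analogous role: solving each relevant collinearity equation for the intersection with this line linearizes the remaining equations. In both cases one can exhibit $T'$ as built by successive rational eliminations from a product of projective lines, making it a rational variety. The main obstacle is the combinatorial bookkeeping of choosing an ordering in which these eliminations can be carried out without losing irreducibility at any step, and the simple $C_{\leq 3}$ condition is precisely what guarantees such an ordering exists. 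By contrast, in the MacLane arrangement of Example~\ref{MacLane}, every candidate triple $L_1,L_2,L_3$ has each line carrying two ``extra'' multiple points, producing a non-linear relation whose two roots account for the two components of its moduli space.
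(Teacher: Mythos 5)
Note first that the paper itself offers no proof of this statement: it is imported verbatim from \cite{Nazir2010} (their Theorem 3.5), so your argument has to be judged against the known proof and on its own terms. Your architecture is the right one and matches the standard approach: normalize $L_1,L_2,L_3$, observe that the residual stabilizer in $\text{PGL}_3(\mathbb{C})$ is connected (in case (1) it is actually three--dimensional, not one--dimensional, but connectedness is all you use), and exhibit $\MA$ as an iterated bundle with irreducible fibers over the configuration space $T'$ of multiple points --- this last step is exactly the mechanism of Lemma~\ref{lemma:extend-irreduciblity}. The gap is in the one step that carries all the content: irreducibility of $T'$. You reduce it to the existence of an ordering in which each collinearity relation can be eliminated by solving rationally for a \emph{fresh} coordinate, and you assert that the simple $C_{\leq 3}$ hypothesis ``is precisely what guarantees such an ordering exists.'' That is not proved, and it is not true in the form you need: nothing in the definition prevents an incidence pattern in which every multiple-point coordinate occurs in two or more collinearity relations (e.g.\ three concurrent lines carrying two variable points each, joined by four transversal lines in a ``grid'' pattern), in which case no peeling order exists, some relations become genuine constraints among already-used variables, and a system of such rational constraints can perfectly well cut out a reducible variety --- that is exactly how MacLane and Falk--Sturmfels acquire two components. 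So as written the proof does not close. (Also, MacLane is not $C_{\leq 3}$ at all --- at most seven of its eight triple points lie on three lines --- so it does not illustrate a nonlinearity \emph{within} your framework.)

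The repair is short and is already visible in your own formula. In case (1), coordinatize $L_j\setminus\{p\}\cong\mathbb{C}$, where $p=L_1\cap L_2\cap L_3$, by the affine coordinate placing $p$ at infinity, i.e.\ set $S=1/s$, $T=1/t$, $U=1/u$; then $u(s+t)=st$ becomes $U=S+T$, which is \emph{linear}. In case (2), after normalizing the designated line's multiple points (there are at most three, namely the two vertices and the one extra point, all of which can be fixed by the torus), a line through three multiple points must pass through the extra fixed point $[1:\mu:0]$, and the condition on the variable points $(0,s_j)\in L_1$, $(t_l,0)\in L_2$ reads $s_j+\mu t_l=0$ --- again linear; lines through a vertex meet two of $L_1,L_2,L_3$ there and hence carry at most two multiple points, so they are absorbed by the fibration lemma. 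Thus in both cases the closure of $T'$ is a linear subspace of an affine space, irreducible with no ordering argument whatsoever, and $T'$ is a nonempty open subset of it (nonempty because $\A$ realizes the lattice). With that substitution your argument becomes a complete proof; without it, the central claim is an unsupported combinatorial assertion.
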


Let $\mathcal{A} = \{L_1, L_2, \dots, L_n\}$ be a line arrangement, 
and $\mathcal{A}'$ $=$ $\{L_1$, $L_2$, $\dots$, $L_{n-1}\}$ be a sub-arrangement.  The following lemma shows when the irreducibility of the moduli space $\mathcal{M}_{\mathcal{A}'}$ will be inherited.

\begin{lemma}[ \cite{Nazir2010}, Lemma 2.4]\label{lemma:extend-irreduciblity}
Assume that the line $L_n$ passes through at most two multiple points of the arrangement $\mathcal{A}$.
Then the moduli space $\mathcal{M}_{\mathcal{A}}$ is a fiber bundle over the moduli space of $\mathcal{M}_{\mathcal{A}'}$. In particular, the moduli space $\mathcal{M}_{\mathcal{A}}$ is irreducible if $\mathcal{M}_{\mathcal{A}'}$ is irreducible.
\end{lemma}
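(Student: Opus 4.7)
The plan is to show that the forgetful map $\pi:\mathcal{M}_{\mathcal{A}}\to\mathcal{M}_{\mathcal{A}'}$, sending the class of $(L_1,\dots,L_n)$ to the class of $(L_1,\dots,L_{n-1})$, is a well-defined locally trivial fibration with irreducible fibers. The map is well-defined because $\text{PGL}_3(\mathbb{C})$ acts diagonally and the projection is equivariant. Irreducibility of $\mathcal{M}_{\mathcal{A}}$ will then be immediate from irreducibility of the base and of each fiber, since a Zariski-locally trivial fibration with irreducible base and irreducible fibers has an irreducible total space.

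First I would describe the fiber combinatorially. Fix a representative $(L_1,\dots,L_{n-1})$ of a point in $\mathcal{M}_{\mathcal{A}'}$. By hypothesis, the lattice $L(\mathcal{A})$ specifies a subset of multiple points of $\mathcal{A}'$, say $\{P_1,\dots,P_k\}$ with $k\le 2$, through which $L_n$ must pass; equivalently, every multiple point of $\mathcal{A}$ lying on $L_n$ is one of the $P_j$. The remaining constraints are \emph{open}: $L_n$ must avoid the finitely many other multiple points of $\mathcal{A}'$, and among the intersections $L_n\cap L_i$ it must not create any unprescribed coincidence (equivalently, $L_n$ must avoid the finite set of lines through pairs of multiple points, or through one multiple point and a prescribed extra intersection, that would enlarge the multiplicity pattern).

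Next I split into the three cases for $k$.
\begin{enumerate}
\item If $k=2$, then $L_n$ is forced to be the unique line $\overline{P_1P_2}$, so the fiber is a single point (nonempty, because any point of $\mathcal{M}_\mathcal{A}$ above the base provides such a realization).
\item If $k=1$, then $L_n$ varies in the pencil through $P_1$, a copy of $\mathbb{CP}^1$; the open conditions remove finitely many points, yielding a nonempty Zariski open subset of $\mathbb{CP}^1$, which is irreducible.
\item If $k=0$, then $L_n$ varies in the dual plane $(\mathbb{CP}^2)^*$; the open conditions remove finitely many points (dual to bad lines), yielding a Zariski open subset of $(\mathbb{CP}^2)^*$, again irreducible.
\end{enumerate}

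For local triviality I would choose, on a Zariski open neighborhood $U\subset\mathcal{M}_{\mathcal{A}'}$, a local section of the $\text{PGL}_3$-quotient, producing a universal sub-arrangement $(L_1(u),\dots,L_{n-1}(u))$ depending algebraically on $u\in U$. Then the designated points $P_1(u),\dots,P_k(u)$ and the finitely many loci to be avoided all vary algebraically, so the total space of fibers over $U$ is a Zariski open subset of a trivial $\text{pt}$-, $\mathbb{CP}^1$-, or $(\mathbb{CP}^2)^*$-bundle, hence locally trivial. The main obstacle I anticipate is this step: one must verify that the local section of the $\text{PGL}_3$-action can be chosen so that the ``removed'' divisor in the projective fiber is a flat family (i.e.\ the excluded points or lines never collide as $u$ varies within $U$). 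This reduces to showing that the combinatorial data --- which points $P_j(u)$ must be passed through and which are to be avoided --- is constant on $U$, which is exactly the defining condition of $\mathcal{M}_{\mathcal{A}'}$: the lattice of the $(n-1)$-line subarrangement is fixed there. With local triviality in hand, the irreducibility conclusion follows as indicated above.
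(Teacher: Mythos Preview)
The paper does not supply its own proof of this lemma; it is quoted verbatim from \cite{Nazir2010} and used as a black box. So there is no ``paper's proof'' to compare against.

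Your sketch is along the standard lines and would work, but there is one point you gloss over: surjectivity of $\pi$. In the $k=2$ case you assert the fiber is ``nonempty, because any point of $\mathcal{M}_{\mathcal{A}}$ above the base provides such a realization'', but this only shows fibers over the \emph{image} are nonempty. Over an arbitrary point of $\mathcal{M}_{\mathcal{A}'}$ the unique candidate line $\overline{P_1P_2}$ may accidentally pass through a third multiple point of $\mathcal{A}'$ or create an unprescribed coincidence, so the fiber can be empty. Thus $\pi$ is generally only a fibration over a Zariski open subset of $\mathcal{M}_{\mathcal{A}'}$, not over all of it (and the lemma as stated is slightly imprecise on this point). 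This does not damage the irreducibility conclusion: the image is a nonempty open subset of the irreducible base, hence irreducible, and your fiber argument then applies over that open set. You should state this explicitly rather than leave it implicit.
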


Applying the above lemma to arrangements of 9 lines, we have the following corollary. 

\begin{cor}\label{cor:exists--a-line-with-2-multiple-points}
Let $\mathcal{A}$ be an arrangement of 9 lines. If there is a line in $\mathcal{A}$ which passes through at most two multiple points of $\mathcal{A}$, then either $\mathcal{A}$ contains a MacLane arrangement as a sub-arrangement, or the moduli space $\mathcal{M}_{\mathcal{A}}$ is irreducible.
\end{cor}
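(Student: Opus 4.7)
The plan is to combine the classification of arrangements of $8$ lines (Proposition 2.4) with the fibration principle of Lemma 2.7. Pick a line $L\in\mathcal{A}$ that passes through at most two multiple points of $\mathcal{A}$ (such a line exists by hypothesis), and let $\mathcal{A}' = \mathcal{A}\setminus\{L\}$, a sub-arrangement of eight lines.

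First I would apply Proposition 2.4 to $\mathcal{A}'$, which forces $\mathcal{A}'$ to be either a simple $C_{\leq 3}$ arrangement or a MacLane arrangement. If $\mathcal{A}'$ is MacLane, then $\mathcal{A}$ contains a MacLane arrangement as a sub-arrangement, and the first alternative of the corollary holds immediately.

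In the remaining case, $\mathcal{A}'$ is simple $C_{\leq 3}$, so Theorem 2.5 guarantees that $\mathcal{M}_{\mathcal{A}'}$ is irreducible. The hypothesis on $L$ is exactly what is needed to invoke Lemma 2.7: it yields a fiber bundle structure $\mathcal{M}_{\mathcal{A}}\to\mathcal{M}_{\mathcal{A}'}$, and since both the base and the fiber (parametrizing choices of the added line $L$ subject to the prescribed incidences) are irreducible, $\mathcal{M}_{\mathcal{A}}$ is irreducible as well.

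The only subtle bookkeeping point is that removing $L$ can change which points are multiple: a triple point of $\mathcal{A}$ on $L$ becomes a double point of $\mathcal{A}'$. This is harmless, because Lemma 2.7 is phrased in terms of multiple points of the full arrangement $\mathcal{A}$ (which is what the hypothesis provides), and Proposition 2.4 applies to $\mathcal{A}'$ regardless of how its multiple points relate to those of $\mathcal{A}$. I do not expect any real obstacle: the corollary is essentially a direct concatenation of Proposition 2.4, Theorem 2.5, and Lemma 2.7.
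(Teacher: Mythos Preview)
Your proposal is correct and is exactly the argument the paper uses: it simply cites the classification of eight-line arrangements (Proposition~\ref{prop:classification-8-lines}) together with the fibration lemma (Lemma~\ref{lemma:extend-irreduciblity}), and your write-up just makes explicit the intermediate step through Theorem~2.4 (irreducibility for simple $C_{\leq 3}$). Your bookkeeping remark about multiple points changing upon deletion is accurate and harmless, as you note.
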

\begin{proof}The conclusion follows directly from Proposition \ref{prop:classification-8-lines} and Lemma \ref{lemma:extend-irreduciblity}.
\end{proof}

\section{Classification of arrangements of 9 lines}
For a line arrangement $\mathcal{A}$, we denote by $m_{\mathcal{A}}$ the highest multiplicity of a multiple point of $\mathcal{A}$. We will divide the classification of arrangements of 9 lines into three cases according to the value of $m_{\mathcal{A}}$.

Let $n_r$ be the number of  multiple points of multiplicity $r$.
We first recall two well-known results on the number of multiple points.

\begin{thm}[\cite{Hirzebruch1986}]\label{thm:Hirzebruch}Let $\mathcal{A}$ be an arrangement of $t$ lines in $\mathbb{CP}^2$. Assume that $n_{t}=n_{t-1}=n_{t-2}=0$. Then
\[n_2+\dfrac{3}{4}n_3\geq t +\sum\limits_{r\geq 5}(2r-9)n_r.\]
\end{thm}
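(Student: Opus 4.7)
The plan is to obtain the inequality as the degeneration, as the cover degree tends to infinity, of the Miyaoka--Yau inequality $c_1^2 \le 3 c_2$ applied to a suitable smooth branched cover of $\mathbb{CP}^2$ ramified along the lines of $\mathcal{A}$. The hypothesis $n_t = n_{t-1} = n_{t-2} = 0$ is precisely what will let us guarantee that the cover is a smooth minimal surface of general type so that Miyaoka--Yau actually applies.

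First I would pass to a model where the total transform of $\mathcal{A}$ is a simple normal crossing divisor. Let $\pi : \widehat{X} \to \mathbb{CP}^2$ be the blow-up at every multiple point $P$ of $\mathcal{A}$ (i.e.\ every point of multiplicity $r_P \ge 3$), with exceptional divisors $E_P$, and let $\widetilde L_i$ denote the proper transforms. Then $D = \sum \widetilde L_i + \sum E_P$ is an SNC divisor on $\widehat X$. One records the basic numerical data: $\widehat X$ has $\chi_{\mathrm{top}} = 3 + \sum_{r\ge 3} n_r$, the self-intersections are $\widetilde L_i^2 = 1 - (\text{number of multiple points on } L_i)$ and $E_P^2 = -1$, and the components meet pairwise either at an original double point or at an exceptional curve, producing an easy count of nodes of $D$ in terms of $n_2, n_3, \dots$ and $t$.

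Next, fix a large integer $n$ and form the abelian cover $Y_n \to \widehat X$ corresponding to the surjection $H_1(\widehat X \setminus D) \twoheadrightarrow (\mathbb{Z}/n)^{9t}$ (Kummer cover), and let $Z_n$ be its minimal resolution. The key geometric claim, which is where the assumption $n_t = n_{t-1} = n_{t-2} = 0$ enters, is that $Z_n$ is of general type for $n$ sufficiently large; this is checked by computing that $K_{Z_n}$ is big and nef, using that the log canonical divisor $K_{\widehat X} + D$ is big as soon as no single line or pencil absorbs too many lines (which the numerical hypothesis rules out). Then one computes the Chern numbers $c_1^2(Z_n)$ and $c_2(Z_n)$ as polynomials of degree $2$ in $n$ using the standard formulas for Chern classes of cyclic covers branched over an SNC divisor, with the leading coefficients expressible in terms of $t$ and the $n_r$. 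Plugging into $3 c_2(Z_n) - c_1^2(Z_n) \ge 0$ and dividing by the leading power of $n$ yields, after simplification, exactly the claimed inequality
\[
n_2 + \tfrac{3}{4} n_3 \;\ge\; t + \sum_{r \ge 5}(2r - 9)\, n_r.
\]

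The main obstacle, rather than the arithmetic of Chern numbers (which is a formal, if tedious, application of the branched-cover formulas), is verifying the general-type hypothesis for $Z_n$, since without it Miyaoka--Yau gives no information; this is precisely where the three vanishing conditions on $n_t, n_{t-1}, n_{t-2}$ are consumed, ensuring that the log pair $(\widehat X, D)$ is not log Fano nor log Calabi--Yau in disguise and that the ramification divisor of the cover has enough positivity.
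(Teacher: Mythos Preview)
The paper does not prove this theorem at all: it is quoted from \cite{Hirzebruch1986} as a black-box input and used immediately afterward in the proof of Proposition~\ref{prop:r=4}. So there is no ``paper's own proof'' to compare against.

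That said, your outline is recognizably Hirzebruch's original argument, so in that sense it matches the \emph{source} the paper cites. Two comments on the sketch itself. First, the Galois group of the Kummer cover is wrong: $H_1(\widehat X\setminus D;\mathbb{Z})\cong H_1(\mathbb{CP}^2\setminus\bigcup L_i;\mathbb{Z})\cong\mathbb{Z}^{t-1}$, so the cover has group $(\mathbb{Z}/n)^{t-1}$, not $(\mathbb{Z}/n)^{9t}$. Second, the part you flag as the ``main obstacle'' is indeed the substantive step, and your account of it is too loose to count as a proof: one has to show that for $n\ge 3$ the resolved cover $Z_n$ is minimal of general type, and the actual case analysis (pencils of $t-1$ or $t-2$ lines through a point force $Z_n$ to be ruled or elliptic) is exactly what consumes the hypotheses $n_{t-1}=n_{t-2}=0$. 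Saying that the log pair is ``not log Fano nor log Calabi--Yau in disguise'' gestures at the right phenomenon but does not pin down the argument. If you want a self-contained proof you should carry out that Kodaira-dimension check explicitly; otherwise, as the paper does, it is standard to cite Hirzebruch.
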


\begin{lemma}[see for instance \cite{Hirzebruch1986} ]\label{lemma:intersection-formula}
Let $\mathcal{A}$ be a line arrangement of $n$ lines in $\mathbb{CP}^2$. We have the following intersection formula:
\[\dfrac{n(n-1)}{2}=\sum_{r\geq 2}\Big( n_r\cdot\dfrac{r(r-1)}{2}\Big).\]
\end{lemma}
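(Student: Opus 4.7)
The plan is to prove this by a standard double-counting argument on the set of unordered pairs of distinct lines in $\mathcal{A}$.

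First I would count the pairs directly. Since $\mathcal{A}$ has $n$ distinct lines, the number of unordered pairs $\{L_i, L_j\}$ with $i \neq j$ is exactly $\binom{n}{2} = \frac{n(n-1)}{2}$. This gives the left-hand side of the claimed identity.

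Next I would count the same pairs by grouping them according to their intersection point. The crucial geometric input is that in $\mathbb{CP}^2$ any two distinct lines meet in exactly one point, so each pair $\{L_i, L_j\}$ is assigned unambiguously to the point $L_i \cap L_j \in L(\mathcal{A})$. Conversely, if $P$ is an intersection point of multiplicity $r$, meaning that exactly $r$ lines of $\mathcal{A}$ pass through $P$, then the pairs $\{L_i, L_j\}$ with $L_i \cap L_j = P$ are precisely the $\binom{r}{2} = \frac{r(r-1)}{2}$ pairs chosen from those $r$ lines. Summing over all intersection points, and grouping together the points with the same multiplicity $r$, yields $\sum_{r \geq 2} n_r \cdot \frac{r(r-1)}{2}$. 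Equating the two counts gives the formula.

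There is no real obstacle here, as the argument is a one-line combinatorial identity; the only thing to be careful about is the indexing convention, namely that the summation runs over $r \geq 2$ so that $n_r$ counts all intersection points of $\mathcal{A}$ (including ordinary double points) rather than only the multiple points in the sense of the earlier definition in the paper.
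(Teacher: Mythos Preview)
Your proof is correct and is exactly the standard double-counting argument for this identity. The paper does not actually supply a proof of this lemma; it is stated with a reference to \cite{Hirzebruch1986} as a well-known fact, so there is nothing to compare against beyond noting that your argument is the canonical one.
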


\subsection{$m_{\mathcal{A}}\geq 5$}
We first observe the following result. 

\begin{prop}\label{prop:r>=5}
Let $\mathcal{A}$ be an arrangements of 9 lines. If $\mathcal{A}$ has a multiple points of multiplicity  at least 5, then the moduli space $\mathcal{M}_\mathcal{A}$ is irreducible. 
\end{prop}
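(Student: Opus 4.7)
The plan is to invoke Corollary \ref{cor:exists--a-line-with-2-multiple-points}, which reduces the statement to two subtasks: (a) exhibiting a line of $\mathcal{A}$ that passes through at most two multiple points, and (b) ruling out that $\mathcal{A}$ contains a MacLane arrangement as a sub-arrangement.

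Subtask (b) is immediate from Example \ref{MacLane}: every multiple point of a MacLane arrangement is a triple point, whereas any $8$-line sub-arrangement of $\mathcal{A}$ retains the point $P$ as a multiple point of multiplicity at least $m_{\mathcal{A}}-1 \ge 4$, so no such sub-arrangement can be MacLane.

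For subtask (a), let $P$ be the multiple point of multiplicity $m=m_{\mathcal{A}}\ge 5$, label the lines through $P$ as $L_1,\dots,L_m$, and the remaining lines as $L_{m+1},\dots,L_9$. Since any two of $L_1,\dots,L_m$ meet only at $P$, any multiple point $Q\ne P$ lying on some $L_i$ (with $i\le m$) must involve $L_i$ together with at least two lines from $L_{m+1},\dots,L_9$. When $m\ge 6$ there are at most $9-m\le 3$ such extra lines, so producing two distinct multiple points on $L_i$ beyond $P$ would demand $4$ extra lines; hence every $L_i$ with $i\le m$ carries at most two multiple points, and we are done.

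The main obstacle is the case $m=5$, where four extra lines are available and a single $L_i$ could in principle carry two multiple points besides $P$, obtained by pairing up the four intersections $L_i\cap L_j$ ($6\le j\le 9$) into two coinciding pairs. I would handle this with a pair-counting argument: of the $\binom{4}{2}=6$ pairs $\{L_a,L_b\}$ with $a,b\in\{6,7,8,9\}$, each pair's intersection point is distinct from $P$ and therefore lies on at most one of $L_1,\dots,L_5$ (otherwise two of the $L_i$'s would share a point away from $P$). Since each $L_i$ carrying two extra multiple points uses up two such pairs, at most $\lfloor 6/2\rfloor=3$ of the five lines $L_1,\dots,L_5$ can carry three or more multiple points, so at least two of them pass through at most two multiple points. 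Applying Corollary \ref{cor:exists--a-line-with-2-multiple-points} to such a line, together with subtask (b), yields the irreducibility of $\mathcal{M}_{\mathcal{A}}$.
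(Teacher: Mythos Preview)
Your proof is correct and follows the same approach as the paper: both locate a line through $P$ carrying at most two multiple points via a count of the at most $\binom{4}{2}=6$ intersection points among the remaining four lines, then apply Corollary~\ref{cor:exists--a-line-with-2-multiple-points}. The paper's version is terser---it treats all $m\ge 5$ at once without your case split and silently skips your subtask~(b) (ruling out a MacLane sub-arrangement), which you handle explicitly and correctly.
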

\begin{proof}
Assume that $L_1\cap L_2\cap\cdots\cap L_5\neq \emptyset$. 
There are at most $6$ double points in $L_6\cup L_7\cup L_8\cup L_9$. Then there are at most $7$ multiple points in $L_1\cup L_2\cup\cdots\cup L_5$.  So at least one of the five lines $L_1$, $L_2$, $\dots$, $L_5$ contains only two multiple points. By Corollary \ref{cor:exists--a-line-with-2-multiple-points}, the moduli space $\MA$ is irreducible.
\end{proof}

\subsection{$m_{\mathcal{A}}=4$}
Let $\mathcal{A}$ be an arrangement of 9 lines. In this subsection, we assume that multiple points of $\mathcal{A}$ are at most quadruple points.

\begin{prop}\label{prop:r=4}Assume that each line of $\mathcal{A}$ passes through at least three multiple points and $n_4\geq 1$. Then,  either $\mathcal{M}_{\mathcal{A}}$ is irreducible or  $\mathcal{A}$ is lattice isomorphic to a Falk-Sturmfels arrangement.
\end{prop}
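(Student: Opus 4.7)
The plan is to use numerical inequalities to reduce the possible invariants $(n_2, n_3, n_4)$ to a very short list, and then to treat each case by either recognising the Falk--Sturmfels lattice or by a direct analysis of the moduli space.

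Since $m_{\mathcal{A}} = 4$, Lemma \ref{lemma:intersection-formula} gives $n_2 + 3n_3 + 6n_4 = 36$ and Theorem \ref{thm:Hirzebruch} applies to yield $n_2 + \tfrac{3}{4}n_3 \geq 9$. The hypothesis that every line meets at least three multiple points translates into $\sum_{r \ge 3} r\, n_r = 3n_3 + 4n_4 \ge 27$. Combining these three relations with $n_4 \ge 1$ should force $n_4 = 1$ and $(n_2, n_3) \in \{(6,8),(3,9)\}$. Let $P$ denote the unique quadruple point and $L_1, \ldots, L_4$ the four lines through it; the remaining five lines $L_5, \ldots, L_9$ miss $P$. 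Each $L_i$ with $i \le 4$ meets at least two triples, and each $L_j$ with $j \ge 5$ meets at least three, so only $3n_3 - 23 \in \{1, 4\}$ excess triple-line incidences remain to be distributed, leaving a short list of combinatorial types.

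The Falk--Sturmfels configuration arises precisely when some non-$P$ line $L_k$ with $k \ge 5$ carries four triples (so $(n_2, n_3, n_4) = (6,8,1)$). Indeed, two distinct triples on $L_k$ cannot both lie on any other common line, so the remaining eight lines split into four pairs, one pair meeting at each triple of $L_k$; matching these pairs against the four lines $L_1, \ldots, L_4$ through $P$ reconstructs exactly the lattice of $FS^\pm$ described in Example \ref{Falk-Sturmfels}, in which case $\mathcal{M}_{\mathcal{A}}$ has the known two-point structure.

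In every remaining combinatorial type I aim to show that $\mathcal{M}_{\mathcal{A}}$ is irreducible. Lemma \ref{lemma:extend-irreduciblity} is not directly available since every line meets at least three multiple points. Instead, the plan is to normalise by $\mathrm{PGL}_3(\mathbb{C})$ so that $P = [0:0:1]$ and three of the lines $L_1, L_2, L_3$ are in standard position, leaving a single modulus for $L_4$ and two coordinates each for $L_5, \ldots, L_9$; each triple-incidence then contributes one polynomial equation on this affine parameter space. The objective is to verify, for each remaining combinatorial type, that these equations either are inconsistent (the combinatorial type is unrealisable) or cut out a single irreducible component. The principal obstacle is precisely this case-by-case computation, in particular ruling out spurious extra components coming from binary choices of which of the two non-$L_k$ lines through a given triple carries a further incidence. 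I expect each such ambiguity to be resolved either by a lattice symmetry exchanging the two roots or by a propagated triple-incidence constraint pinning down the correct one, so that no extra component of $\mathcal{M}_{\mathcal{A}}$ survives outside the Falk--Sturmfels case.
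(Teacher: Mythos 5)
Your numerical reduction is sound and even a little cleaner than the paper's: combining $n_2+3n_3+6n_4=36$, Hirzebruch, and the incidence count $3n_3+4n_4\geq 27$ does force $n_4=1$ and $(n_2,n_3)\in\{(6,8),(3,9)\}$ (the paper gets $n_4=1$ by bounding the number of triple points on the four lines through the quadruple point instead). But from that point on there are two genuine gaps. First, your dichotomy ``Falk--Sturmfels iff some line missing $P$ carries four triples'' is only asserted: the pairing of the eight remaining lines at the four triples of $L_k$ does not by itself ``reconstruct exactly the lattice of $FS^\pm$'' --- you still have to locate the other four triple points and show the incidence conditions force the $FS$ lattice (and note that a priori the $(3,9)$ case could also produce lines with four triples, so the claimed equivalence needs an argument, not just the $(6,8)$ count). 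Second, and more seriously, for ``every remaining combinatorial type'' you propose to write down the moduli space as an explicit affine variety and check irreducibility case by case; you yourself identify this as the principal obstacle, and it is exactly the part of the proof that is not done. Without a structural constraint limiting where the triple points can sit, the list of combinatorial types is not obviously short, and ``I expect each ambiguity to be resolved'' is not a proof.

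The idea your proposal is missing is the simple $C_{\leq 3}$ machinery, which is how the paper avoids all of that computation. The paper first proves a structural fact (all triple points lie on the four lines through the quadruple point, by a counting argument on the double points of $\{L_5,\dots,L_9\}$), then splits on whether $\mathcal{A}$ is simple $C_{\leq 3}$: if it is, irreducibility of $\mathcal{M}_{\mathcal{A}}$ is immediate from Theorem 2.7, with no case analysis of equations at all; if it is not, Lemma 2.5 produces the sub-arrangement $\mathcal{A}_s$ with two triple points and nine double points $Q_{ij}$, the quadruple point must be one of the $Q_{ij}$, and a short combinatorial analysis of which $Q_{ij}$ the lines $L_7,L_8,L_9$ can pass through leaves only the Falk--Sturmfels lattice. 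To complete your proof along your own lines you would either have to carry out the full realizability/irreducibility computation for each type, or import the simple $C_{\leq 3}$ dichotomy as the paper does.
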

\begin{proof}We will first show that $n_4=1$.

 Let $L_1\cap L_2\cap L_3\cap L_4$ be a quadruple point of $\mathcal{A}$. Since each line passes through at least three multiple points. Then each of $L_1$, $L_2$, $L_3$ and $L_4$ should passes through two more multiple points besides the quadruple point $L_1\cap L_2\cap L_3\cap L_4$. Then, there will be at least 9 multiple points on those four lines. Since multiple points of $\mathcal{A}$ are at most quadruple points and there are $n_4$ quadruple points. Therefore, there should be at least $9-n_4$ triple points on those four lines such that each line passes through at least 3 multiple points. By Theorem \ref{thm:Hirzebruch} and Lemma \ref{lemma:intersection-formula}, we have 
\[36= 6n_4+3n_3+n_2\geq 6n_4+\dfrac{9}{4}n_3+9\geq 6n_4+\dfrac{9}{4}(9-n_4)+9.\]
Solving the inequality, we obtain that $n_4\leq \dfrac{9}{5}<2$. Therefore, by the assumption, we have $n_4=1$.

Now we claim that all triple points should be on the lines passing through the quadruple point.

Let $L_1\cap L_2\cap L_3\cap L_4$ be the quadruple. Suppose, contrary to our claim, that $L_5\cap L_6\cap L_7$ is a triple point which is not on $L_1\cup L_2\cup L_3\cup L_4$. Note that there are at most 7 double points on $L_5\cup L_6\cup L_7\cup L_8\cup L_9$. Then the intersection set $(L_1\cup L_2\cup L_3\cup L_4)\cap (L_5\cup L_6\cup L_7\cup L_8\cup L_9)$ will contain at most 7 triple points which are on $L_1\cup L_2\cup L_3\cup L_4$. However, there should be at least 8 triple points so that each of the four lines $L_1$, $L_2$, $L_3$ and $L_4$ passes through at least three multiple points. Therefore, by the assumption, all triple points must be on the lines passing through the quadruple point.

If $\mathcal{A}$ is simple $C_{\leq 3}$, then the moduli space $\mathcal{M}_\mathcal{A}$ is irreducible. We only need to consider the case that $\mathcal{A}$ is not simple $C_{\leq 3}$. By Lemma \ref{lemma:non-simple}, we know that the arrangement $\mathcal{A}$ has a sub-arrangement $\mathcal{A}_s$. It is not hard to see that the quadruple point should be one of $Q_{ij}$, where $i,j\in \{1,2,3\}$. 

Up to a lattice isomorphism, we may assume that the only quadruple point is $L_1\cap L_4\cap L_7\cap L_8=Q_{11}$.

Since all triple points should be on $L_1\cup L_4\cup L_7\cup L_8$,  then all possible triple points on $L_7$ and $L_8$ should be in the following set of points 
\[\{Q_{22}, Q_{23}, Q_{32}, Q_{33}, L_7\cap L_9, L_8\cap L_9\}.\] 
The following figure (Figure \ref{fig:Fake}) is an example but an excluding one, for $L_6$ passes through only one triple point.
\begin{figure}[htbp]
 \centering
 \begin{tikzpicture}[scale=0.7]
\draw[domain=-1.5:3.5] plot (0, \x) node[above]{$L_4$};
\draw[domain=-1.5:3.5] plot (2, \x) node[above]{$L_5$};
\draw[domain=-1.5:3.5] plot (3, \x) node[above]{$L_6$};

\draw[domain=5:-1] plot (\x, 0) node[left]{$L_3$};
\draw[domain=5:-1] plot (\x, 1) node[left]{$L_2$};
\draw[domain=5:-1] plot (\x, 2) node[left]{$L_1$};

\draw[domain=4:-1] plot (\x, {-\x + 2}) node[above]{$L_7$};
\draw[domain=5:-1] plot (\x, {-0.5*\x+2}) node[left]{$L_8$};
\draw[domain=3.5:-1] plot (\x, \x) node[left]{$L_9$};
\end{tikzpicture}
 \caption{\label{fig:Fake} An excluding arrangement}
 \end{figure}
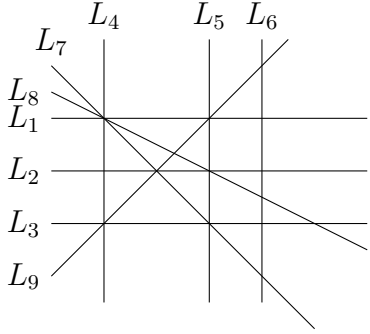 
 
Therefore, each of the lines $L_7$ and $L_8$ will have at least one $Q_{ij}$, where $i, j\in \{2, 3\}$.

 \begin{enumerate}
 \item Assume that each of the lines $L_7$ and $L_8$ passes through exactly one of the points $\{Q_{22},  Q_{23},  Q_{32},  Q_{33}\}$. 
 
 If those two $Q_{ij}$ are on same line, then one of the four lines $L_2$, $L_3$, $L_5$ and $L_6$ will have at most two multiple points. For example, in Figure \ref{fig:Fake}, the line $L_6$ passes through only one multiple points, $L_4\cap L_5\cap L_6$. 
 
 Assume that they are not on the same line. Up to switching labels between  $L_2$ and $L_3$, correspondingly, $L_5$ and $L_6$, we may assume that $Q_{32}\in L_7$ and $Q_{23}\in L_8$. Then either $\{Q_{31}, Q_{13}\}\subset L_9$ or $\{Q_{21}, Q_{12}\}\subset L_9$. Correspondingly, $\{L_2\cap L_7, L_5\cap L_8\}\subset L_9$ or $\{L_3\cap L_7, L_6\cap L_8\}\subset L_9$.   By switching the labels between $L_2$ and $L_3$,  $L_5$ and $L_6$, and $L_7$ and $L_8$, we see that those two arrangements are lattice isomorphic. Moreover, one can check that both arrangements (see Figure \ref{fig:FS-in-classification-1}) are lattice isomorphic to Falk-Sturmfels arrangements. 
 \begin{figure}[htbp]
 \centering
\begin{tikzpicture}[domain=-2:4, scale=0.7]
\draw[domain=-2:4] plot (0, \x) node[above]{$L_4$};
\draw[domain=-2:4] plot (1, \x) node[above]{$L_5$};
\draw[domain=-2:4] plot ({(1+sqrt(5))/2}, \x) node[above]{$L_6$};

\draw[domain=4:-1] plot (\x, 0) node[left]{$L_3$};
\draw[domain=4:-1] plot (\x, 1) node[left]{$L_2$};
\draw[domain=4:-1] plot (\x, {(3+sqrt(5))/2}) node[left]{$L_1$};

\draw[domain=3.54:-1] plot (\x, {-\x + (3+sqrt(5))/2}) node[left]{$L_7$};
\draw[domain={4-sqrt(5)}:{-(5-2*sqrt(5))}] plot (\x, {-(3+sqrt(5))/2*\x+(3+sqrt(5))/2}) node[left=-0.15cm]{$L_8$};
\draw[domain=2.2:-0.8] plot (\x, {(sqrt(5)+1)/2*\x}) node[left]{$L_9$};
\end{tikzpicture}
 \caption{\label{fig:FS-in-classification-1}Falk-Sturmfels arrangement 1}
 \end{figure}
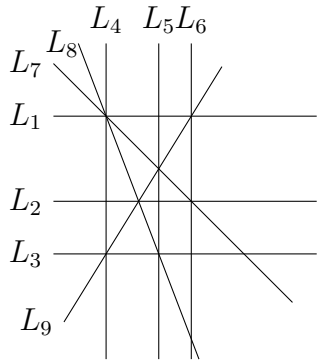
 
\item Assume that one of the lines $L_7$ and $L_8$ passes through two points out of the four points  $Q_{22}$, $Q_{23}$, $Q_{32}$ and $Q_{33}$, but the other one passes only one point out of the four points  $Q_{22}$, $Q_{23}$, $Q_{32}$ and $Q_{33}$. 

Up to a lattice isomorphism,  we may assume that $\{Q_{11}, Q_{22}, Q_{33}\}\subset L_7$ and  $\{Q_{11}, Q_{32}\}\subset L_8$. 
  Then either $L_2\cap L_8\in L_9$ or $L_6\cap L_8\in L_9$.  Otherwise, $L_8$ will have only two multiple points. Correspondingly, $\{Q_{31}, Q_{13}\}\subset L_9$ or $\{Q_{21}, Q_{12}\}\subset L_9$. By first switching the labels between $L_1$ and $L_4$,  $L_2$ and $L_5$, and $L_3\cap L_6$, then switching the labels between $L_2$ and $L_3$, and $L_5$ and $L_6$, we see that those two arrangements are lattice isomorphic. Moreover,  we check that $\mathcal{A}$ (see Figure \ref{fig:FS-in-classification-2}) is also lattice isomorphic to Falk-Sturmfels arrangements.
  \begin{figure}[htbp]
 \centering
 \begin{tikzpicture}[domain=-2:4, scale=0.75]
\draw plot (0, \x) node[above=0.05cm]{$L_4$};
\draw  plot (1, \x) node[above=0.05cm]{$L_5$};
\draw  plot ({(1+sqrt(5))/2}, \x) node[above=0.05cm]{$L_6$};

\draw[domain=4:-2] plot (\x, 0) node[left]{$L_3$};
\draw[domain=4:-2] plot (\x, 1) node[left]{$L_2$};
\draw[domain=4:-2] plot (\x, {(3+sqrt(5))/2}) node[left]{$L_1$};

\draw[domain=-1.2:2.4] plot (\x, {(1+sqrt(5))/2*\x}) node[right]{$L_9$};
\draw[domain=1.75:-0.5] plot (\x, {-(3+sqrt(5))/2*\x+(3+sqrt(5))/2}) node[above=-0.07cm]{$L_8$};
\draw[domain=2.8:-0.8] plot (\x, {-(1+sqrt(5))/2*\x+(3+sqrt(5))/2}) node[left]{$L_7$};
\end{tikzpicture}
 \caption{\label{fig:FS-in-classification-2}Falk-Sturmfels arrangement 2}
 \end{figure}
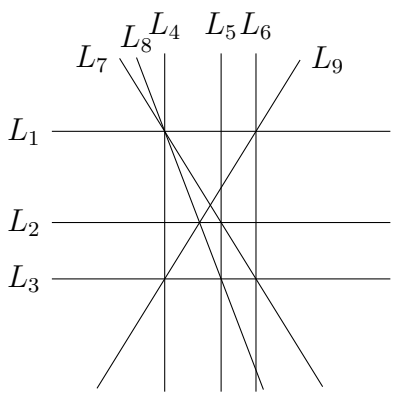
 
  \item Assume that each of $L_7$ and $L_8$ contains two of $\{Q_{22}, Q_{23}, Q_{32}, Q_{33}\}$, then $L_9$ will contain at most two multiple points. 
 \end{enumerate}
 
 Therefore, we conclude that either $\mathcal{M}_\mathcal{A}$ is irreducible or $\mathcal{A}$ is lattice isomorphic to a Falk-Sturmfels arrangement.
 
\end{proof}

\subsection{$m_{\mathcal{A}}=3$}
Now we consider the last case that all multiple points are triple points.   
We will first investigate possible values of $n_3$ such that each line has at least three triple points. Notice that $n_3$ should be no less than 9. On the other hand, we observe the following result.

\begin{lemma}\label{lemma:10-triples}
Let $\mathcal{A}$ be an arrangement of 9 lines. Assume that all multiple points of $\mathcal{A}$ are triple points.  If $\mathcal{A}$ does not contain a MacLane arrangement as a sub-arrangement and is not simple $C_{\leq 3}$. Then, $\mathcal{A}$ has at most 10 triple points.
\end{lemma}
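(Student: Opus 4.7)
The plan is to argue by contradiction: suppose $n_3 \geq 11$, and force either an impossible configuration or a MacLane sub-arrangement. First, by the intersection formula (Lemma \ref{lemma:intersection-formula}), $n_2 + 3n_3 = 36$, so $n_3 \in \{11, 12\}$ and $n_2 \leq 3$. Since $\mathcal{A}$ is not simple $C_{\leq 3}$, Lemma \ref{lemma:non-simple} supplies a six-line sub-arrangement $\mathcal{A}_s = \{L_1, \ldots, L_6\}$ with triple points $T_1 = L_1 \cap L_2 \cap L_3$, $T_2 = L_4 \cap L_5 \cap L_6$ and nine double points $Q_{ij} = L_i \cap L_{j+3}$, $i, j \in \{1, 2, 3\}$. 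Let $L_7, L_8, L_9$ be the remaining three lines.

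I classify the triples of $\mathcal{A}$ into four types: (A) $T_1, T_2$; (B) \emph{$Q$-triples} $Q_{ij}$ lying on some $L_k$, $k \in \{7, 8, 9\}$ (count $T_B$); (C) \emph{pair triples} $L_k \cap L_{k'} \cap L_a$ at $P_{kk'} := L_k \cap L_{k'}$ with $a \in \{1, \ldots, 6\}$ (count $T_C$); (D) the concurrence point $L_7 \cap L_8 \cap L_9$ if it exists (count $T_D \in \{0, 1\}$). The constraint $m_{\mathcal{A}} = 3$ then yields: no $L_k$ passes through $T_1$ or $T_2$; each $Q_{ij}$ lies on at most one $L_k$, so $S_k := \{(i, j) : Q_{ij} \in L_k\}$ is a partial matching of $\{1,2,3\}^2$ with $|S_k| \leq 3$ and $T_B \leq 9$; $T_D = 1$ forces $T_C = 0$; and a pair triple at $P_{kk'}$ through $L_a$ with $a \in \{1, 2, 3\}$ requires $a \notin \pi_1(S_k) \cup \pi_1(S_{k'})$ (otherwise $L_a \cap L_k = Q_{a, j}$ would coincide with $P_{kk'}$, producing a quadruple point), with the symmetric condition for $a \in \{4, 5, 6\}$ via $\pi_2$. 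Note $n_3 = 2 + T_B + T_C + T_D$.

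I then enumerate admissible tuples $(T_B, T_C, T_D)$ with $T_B + T_C + T_D = n_3 - 2 \in \{9, 10\}$. In the ``Latin-square'' tuples---$(9, 0, 0)$ and $(8, 0, 1)$ for $n_3 = 11$, and $(9, 0, 1)$ for $n_3 = 12$---the sets $S_k$ nearly partition $\{1, 2, 3\}^2$. Removing a line $L_k \in \{L_7, L_8, L_9\}$ with minimal $|S_k|$ then decreases every remaining line's triple count by exactly one, yielding an 8-line sub-arrangement with 8 triple points and each line through exactly 3 of them. This is the combinatorial $(8_3)$ Möbius-Kantor configuration, whose unique projective realization in $\mathbb{CP}^2$ is the MacLane arrangement (Example \ref{MacLane}), contradicting the hypothesis. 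In the remaining tuples $(8, 1, 0)$, $(7, 2, 0)$ and $(8, 2, 0)$, pigeonhole on $T_B \geq 7$ forces some $|S_k| = 3$; but then $\pi_1(S_k) = \pi_2(S_k) = \{1, 2, 3\}$ blocks all pair triples involving $L_k$, and the resulting bound on $T_C$ contradicts the tuple.

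The hardest case is $(T_B, T_C, T_D) = (6, 3, 0)$ for $n_3 = 11$: now $|S_k| = 2$ for all $k$, and each of the three pairs $(k, k')$ must admit a pair triple. A short case analysis of whether the $\pi_1$- or $\pi_2$-avoidance condition is met for each pair shows that either all three $\pi_1(S_k)$'s coincide as a fixed 2-subset $\{i_1, i_2\} \subset \{1, 2, 3\}$, or the $\pi_2$-analogue holds. In the $\pi_1$ case, all three pair triples must lie on the unique line $L_{i_3}$ with $i_3 \notin \{i_1, i_2\}$, so $L_{i_3}$ passes through all of $P_{78}, P_{79}, P_{89}$; but then $L_{i_3}$ shares two distinct points with $L_7$, forcing $P_{78} = P_{79}$ and hence the concurrence $L_7 \cap L_8 \cap L_9 \neq \emptyset$, contradicting $T_D = 0$. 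The $\pi_2$ case is symmetric, completing the proof.
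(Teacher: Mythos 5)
Your argument is correct in substance but takes a genuinely different route from the paper's. The paper works directly with double points: since $36=n_2+3n_3$ by Lemma \ref{lemma:intersection-formula}, it suffices to show $n_2\geq 4$, and the author finds four double points by noting that the no-MacLane hypothesis forces at most one of $L_7,L_8,L_9$ to pass through three of the $Q_{ij}$ (so at least two $Q_{ij}$ survive as double points of $\mathcal{A}$), and then running a short case analysis on how many double points of $\mathcal{A}$ lie on $L_7$ and $L_8$. You instead assume $n_3\geq 11$, give a complete typology of the triple points of $\mathcal{A}$ relative to the sub-arrangement $\mathcal{A}_s$ ($T_1,T_2$; promoted $Q_{ij}$; pair triples at $L_k\cap L_{k'}$; the concurrence point), and exhaust the count vectors $(T_B,T_C,T_D)$. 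Your version is longer but more transparent: the partial-permutation structure of the sets $S_k$ and the row/column avoidance condition make each exclusion mechanical, and the MacLane hypothesis enters exactly where it must, in the near-Latin-square tuples that really do contain an $(8_3)$ sub-configuration; by contrast the paper's final step (seven double points of $\{L_1,\dots,L_8\}$ all lying on $L_4\cup L_5\cup L_6$, of which $L_9$ can absorb at most three) is quite terse.

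Three small repairs are needed, none fatal. First, your explicit tuple list omits $(9,1,0)$ and $(7,3,0)$ for $n_3=12$; both die by the same blocking argument you already state ($T_B\geq 7$ forces some $|S_k|=3$, hence $T_C\leq 1$, and $T_B=9$ forces $T_C=0$), but they should appear in the enumeration. Second, in the tuple $(8,0,1)$ it is not literally true that deleting the line with $|S_k|=2$ decreases every remaining line's triple count by one: the two lines of $\mathcal{A}_s$ through the unique non-promoted point $Q_{i_0j_0}$ carry only three triples to begin with and lose none. What saves the conclusion is that the omitted cell $(i_0,j_0)$ necessarily lies in the row and column missed by the deleted $S_k$, so every surviving line still ends with exactly three of the eight surviving triples. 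Third, in the $(6,3,0)$ case the assertion that all three pair triples lie on $L_{i_3}$ requires checking that no pair can also satisfy the column-avoidance condition; this follows because when all $\pi_1(S_k)$ equal $\{i_1,i_2\}$ the three disjoint $S_k$ partition the $2\times 3$ block $\{i_1,i_2\}\times\{1,2,3\}$ into partial permutations, which forces the $\pi_2(S_k)$ to be pairwise distinct. With these points spelled out the proof closes.
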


\begin{proof}By  Lemma \ref{lemma:intersection-formula}, to show that $n_3\leq 10$, it is enough to show that $n_2\geq 4$. 

Since $\mathcal{A}$ does not contain a MacLane arrangement,  at most one of the lines $L_7$, $L_8$ and $L_9$ passes through three $Q_{ij}$, where $i, j\in \{1, 2, 3\}$, (defined as above). We may assume that each of the lines $L_7$ and $L_8$ passes through at most two $Q_{ij}$.  By our assumption and Lemma \ref{lemma:non-simple}, the arrangement $\mathcal{A}$ has a sub-arrangement $\mathcal{A}_s$.    

Let $x$ be the number of $Q_{ij}$ which are not in $L_7\cup L_8\cup L_9$. It is clear that $x\geq 2$. Let $y$ and $z$ be the number of double points of $\mathcal{A}$ which are in $L_7\cap (L_1\cup L_2\cup\cdots \cup L_6)$ and $L_8\cap (L_1\cup L_2\cup\cdots \cup L_6)$ respectively. If $y+z\geq 2$, then we have $n_2\geq x+(y+z)\geq 4$. 
 
Assume that $y+z\leq 1$. Then each of  the lines $L_7$ and $L_8$ should passes through exactly  two $Q_{ij}$.  Moreover, $L_7\cap L_8$ must be a triple point in $L_1\cup L_2\cup\cdots \cup L_6$.  We see now the sub-arrangement $\mathcal{A}'=\{L_1, L_2,\dots, L_8\}$ have 7 double points. Without of loss generality, we assume that $L_7\cap L_8$ is on $L_2$. It is not hard to see that the 7 double points of $\mathcal{A}'$  are all on $L_4\cup L_5\cup L_6$. The line $L_9$ can only pass through at most three double points of $\mathcal{A}'$. Therefore, the arrangement $\mathcal{A}$ still has at least 4 double points.
\end{proof}

\begin{remark}It is worth to point out the following fact. 
By Theorem 2.15  \cite{Csima1993}, if our arrangements is  {\em real arrangements}, i.e.  coefficients of the defining equations of the lines are real numbers, then there are at least $60/13>4$ double points. Hence there should be at most 10 triple points. However, there seems no similar result for complex line arrangements.   
\end{remark}

\begin{prop}\label{prop:9-3}
Let $\A$ be an arrangement of $9$ lines with $9$ triple points. Assume that all multiple points of $\mathcal{A}$ are triple points and each line passes through exactly three triple points. Then the moduli space $\MA$ is irreducible.
\end{prop}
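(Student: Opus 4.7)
My plan is to use a line-removal argument. Remove an arbitrary line $L_9$ to form the sub-arrangement $\A' = \A \setminus \{L_9\}$ of $8$ lines. Since $L_9$ passes through exactly $3$ triple points of $\A$ (each of which becomes a double point in $\A'$, because deleting $L_9$ leaves only two lines through that point), the arrangement $\A'$ has $9 - 3 = 6$ triple points. In particular $\A'$ cannot be a MacLane arrangement, which has $8$ triple points, so by Proposition \ref{prop:classification-8-lines} the arrangement $\A'$ is simple $C_{\leq 3}$, and by Theorem 2.9 the moduli space $\mathcal{M}_{\A'}$ is irreducible.

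Next I would observe that the forgetful map $\mathcal{M}_\A \to \mathcal{M}_{\A'}$ is injective: given a realization of $\A'$, the line $L_9$ is uniquely recovered as the line through any two of the three specified double points of $\A'$ that are required to become triples in $\A$. Hence $\mathcal{M}_\A$ embeds as a locally closed subvariety of $\mathcal{M}_{\A'}$, cut out by the collinearity condition on those three marked double points, intersected with open conditions that prevent accidental coincidences with other multiple points. In particular, if the collinearity condition turns out to be automatic (as in a Pappus-like configuration), then $\mathcal{M}_\A$ is a dense open subset of $\mathcal{M}_{\A'}$, hence irreducible.

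To finish, I would split on whether $\A$ itself is simple $C_{\leq 3}$. If so, Theorem 2.9 applies to $\A$ directly. Otherwise, Lemma \ref{lemma:non-simple} gives the sub-arrangement $\A_s$, and a counting argument for the $7$ non-$\{P_1, P_2\}$ triples yields $b + 2c = 2$, where $b$ and $c$ are the numbers of these triples lying on exactly two or three of $\{L_7, L_8, L_9\}$ respectively. This leaves only $(b,c) = (2,0)$ or $(0,1)$, and each sub-case admits only finitely many combinatorial types once the $S_3 \times S_3$ relabeling symmetry of the $3 \times 3$ grid of $Q_{ij}$'s is taken into account. For each combinatorial type I would use the simple $C_{\leq 3}$ coordinates on $\mathcal{M}_{\A'}$ to write the collinearity equation explicitly.

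The main obstacle will be verifying irreducibility of the collinearity locus for the non-Pappus types: the equation must be shown to be either automatically satisfied or to define an irreducible subvariety of the irreducible $\mathcal{M}_{\A'}$. Since only a small finite list of types arises from the constraint $b+2c=2$, I expect this reduces to a direct case-by-case polynomial check rather than a single conceptual argument.
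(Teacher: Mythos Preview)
Your approach is valid as an outline but takes a genuinely different route from the paper. The paper's proof is a two-line appeal to the classical combinatorial classification of $(9_3)$ configurations: by Theorem 2.2.1 of Gr\"unbaum, any arrangement of $9$ lines with $9$ triple points and three triples per line is lattice isomorphic to one of exactly three explicit configurations (drawn in Figure~\ref{fig:9-3}), and then one checks each of the three moduli spaces directly. That is the entire argument.

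Your proposal instead re-derives a finite list of combinatorial types from scratch, via the $\A_s$ sub-arrangement of Lemma~\ref{lemma:non-simple} and the incidence count $b+2c=2$. This is self-contained (no external reference to Gr\"unbaum) and dovetails with the machinery already set up in the paper, which is a genuine advantage. On the other hand, the line-removal step---embedding $\mathcal{M}_\A$ as a locally closed subvariety of the irreducible $\mathcal{M}_{\A'}$---is correct but does not by itself buy irreducibility, as you note: a hypersurface in an irreducible variety can be reducible. So the embedding is really just a way of setting up coordinates for the final case check, and one could equally well parametrize $\mathcal{M}_\A$ directly in each case by fixing four lines in general position via $\mathrm{PGL}_3$. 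Either way, both your route and the paper's terminate in the same residual work: a type-by-type verification that neither actually writes out.
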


\begin{proof}
It is known (see Theorem 2.2.1 \cite{Grunbaum2009}) that $\mathcal{A}$ is lattice isomorphic to one of the three arrangements appearing in Figure \ref{fig:9-3}. 
\begin{figure}[htbp]
\centering
\subfigure[]{\begin{tikzpicture}[scale=0.57]
\draw[domain=-2.5:5]  plot (0, \x) node[above]{$L_4$};
\draw[domain=-2.5:5]  plot (1, \x) node[above]{$L_5$};
\draw[domain=-2.5:5]  plot (3, \x) node[above]{$L_6$};
\draw[domain=4:-3.5] plot (\x, 0) node[left]{$L_1$};
\draw[domain=4:-3.5] plot (\x, 1) node[left]{$L_2$};
\draw[domain=4:-3.5] plot (\x, 2) node[left]{$L_3$};
\draw[domain=-3.5:4] plot (\x, {\x+1}) node[right]{$L_7$};
\draw[domain=-3.5:4] plot (\x, {2*\x/3}) node[right]{$L_8$};
\draw[domain=-3.5:4] plot (\x, {(\x-1)/2}) node[right]{$L_9$};
\end{tikzpicture}}
\hspace{2em}
\subfigure[]{
\begin{tikzpicture}[scale=0.6]
\draw[domain=-2.5:4.5] plot (0, \x) node[above]{$L_4$};
\draw[domain=-2.5:4.5] plot (1, \x) node[above]{$L_5$};
\draw[domain=-2.5:4.5] plot (3, \x) node[above]{$L_6$};
\draw[domain=4:-2] plot (\x, 0) node[left]{$L_1$};
\draw[domain=4:-2] plot (\x, 1) node[left]{$L_2$};
\draw[domain=4:-2] plot (\x, 2) node[left]{$L_3$};
\draw[domain=4:-2] plot (\x, {-2/3*\x+2}) node[left]{$L_8$};
\draw[domain=3.25:-0.25] plot (\x, {2*\x-2}) node[left]{$L_9$};
\draw[domain=3.5:-2] plot (\x, {\x+1}) node[left]{$L_7$};
\end{tikzpicture}
}
\hspace{2em}
\subfigure[]{\begin{tikzpicture}[scale=0.75]
\draw[domain=-1:4] plot (0, \x) node[above]{$L_4$};
\draw[domain=-1:4] plot (1, \x) node[above]{$L_5$};
\draw[domain=-1:4] plot (3, \x) node[above]{$L_6$};
\draw[domain=5:-1] plot (\x, 0) node[left]{$L_1$};
\draw[domain=5:-1] plot (\x, 1) node[left]{$L_2$};
\draw[domain=5:-1] plot (\x, 2) node[left]{$L_3$};
\draw[domain=4:-1] plot (\x, \x) node[left]{$L_7$};
\draw[domain=-1:5] plot (\x, {\x/3+1}) node[above]{$L_8$};
\draw[domain=4:-1] plot (\x, {-\x+3}) node[left]{$L_9$};
\end{tikzpicture}}
\caption{\label{fig:9-3} $9_3$ arrangements}
\end{figure}
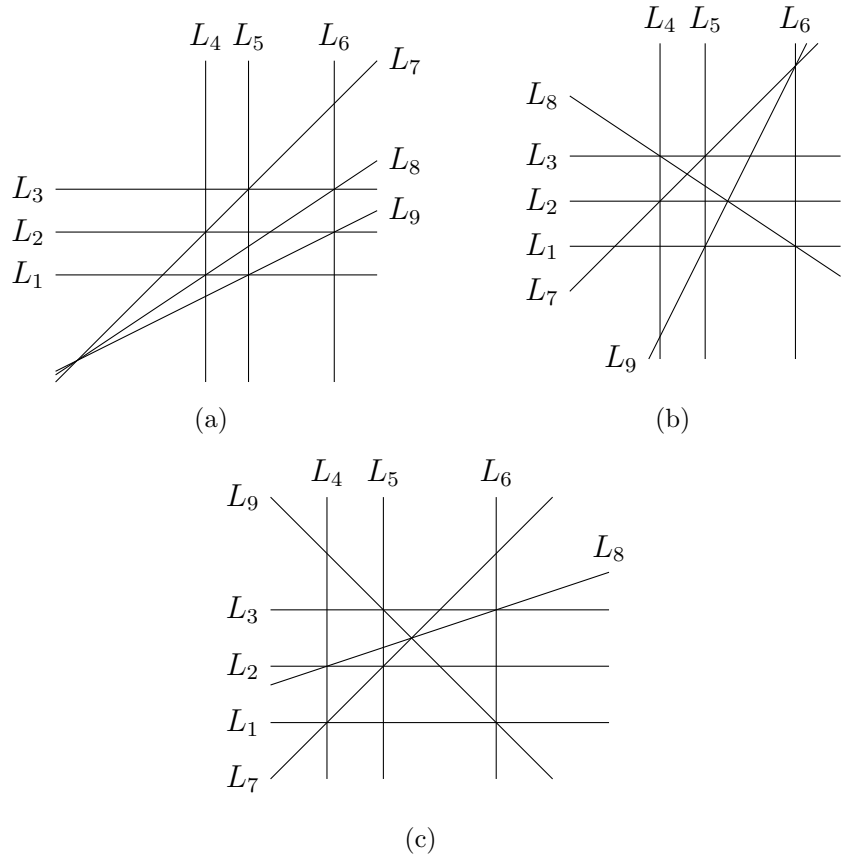
One can check that  the moduli space $\mathcal{M}_{\mathcal{A}}$ is irreducible in each case.
\end{proof}

\begin{prop}\label{prop:r=3}
Let $\mathcal{A}$ be an arrangement of 9 lines with 10 triple points.  Assume that all multiple points of $\mathcal{A}$ are triple points and each line passes through at least three triple points.  If $\mathcal{A}$ is not simple $C_{\leq 3}$, then it is isomorphic to $\mathcal{A}^{\pm\sqrt{-1}}$ (see Figure \ref{fig:A+-}).
\end{prop}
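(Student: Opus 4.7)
The plan is to extend the sub-arrangement analysis from Proposition \ref{prop:r=4}. By Lemma \ref{lemma:non-simple}, $\mathcal{A}$ contains the sub-arrangement $\mathcal{A}_s = \{L_1,\ldots,L_6\}$ with the two triples $L_1\cap L_2\cap L_3$, $L_4\cap L_5\cap L_6$ and the nine distinct double points $Q_{ij} = L_i\cap L_{j+3}$. Let $L_7,L_8,L_9$ be the remaining three lines of $\mathcal{A}$; they must account for the other $8$ triple points of $\mathcal{A}$.

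The first step is a global incidence count. Two distinct triples through a common line must use disjoint pairs from the remaining eight lines, for a shared line would promote their common point to a quadruple. Hence each line of $\mathcal{A}$ carries at most $4$ triples. Together with $\sum_{i=1}^9 k_i = 3\cdot 10 = 30$ and $k_i \geq 3$, this forces exactly three lines to have $k_i = 4$ and the remaining six to have $k_i = 3$. Moreover, since $m_\mathcal{A} = 3$, the only triples supported entirely on $\mathcal{A}_s$ are the two original ones, so each of the $8$ new triples involves at least one line from $\{L_7,L_8,L_9\}$.

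Classify the new triples by the number of lines from $\{L_7,L_8,L_9\}$ they involve: write $a$, $b$, $c$ for the counts of triples involving one, two, or three such lines, respectively. Then $c\in\{0,1\}$, $b\leq 3$, and $a+b+c = 8$. The demand that each of $L_1,\ldots,L_6$ meet at least two new triples yields $2a+b\geq 12$, while the demand that each of $L_7,L_8,L_9$ meet at least three triples yields $a+2b+3c\geq 9$. These inequalities reduce $(a,b,c)$ to a short list of cases. For each case I would enumerate the ways to place the promoted $Q_{ij}$'s on $L_7\cup L_8\cup L_9$ and the type-$b$ triples on $L_1\cup\cdots\cup L_6$, exploiting the symmetries of $\mathcal{A}_s$ (the swap $\{L_1,L_2,L_3\}\leftrightarrow\{L_4,L_5,L_6\}$, permutations within each triple, and permutations of $\{L_7,L_8,L_9\}$) to reduce each case to a canonical model. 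A placement is discarded if it leaves some line with fewer than three triples or forces a hidden quadruple. I expect all candidates except $(a,b,c) = (7,0,1)$ to be discarded, and the survivor to match the lattice of $\mathcal{A}^{\pm\sqrt{-1}}$, which I would verify by an explicit relabelling of the ten triples (analogous to the relabellings used in Proposition \ref{prop:r=4}).

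The main obstacle is the bookkeeping: for each candidate $(a,b,c)$ there are several inequivalent placements of the promoted $Q_{ij}$'s among the three extra lines, and enforcing the triple-count on every single line while keeping the enumeration non-redundant modulo symmetry requires careful organization. Recognizing the surviving configuration as $L(\mathcal{A}^{\pm\sqrt{-1}})$ is also delicate, because the three non-concurrent $4$-triple lines of $\mathcal{A}^{\pm\sqrt{-1}}$ do not coincide with $\{L_7,L_8,L_9\}$; one must locate the hidden $4$-triple lines among $L_1,\ldots,L_6$ from the combinatorial data before the relabelling can be made explicit.
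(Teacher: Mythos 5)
Your setup is sound as far as it goes: the bound of four triples per line, the count $\sum k_i=30$ forcing three lines with four triples and six with three, the observation that the only triples supported on $\mathcal{A}_s$ are its two concurrencies, and the inequalities $2a+b\geq 12$, $a+2b+3c\geq 9$ are all correct. But what you have written is a plan, not a proof: the entire content of the proposition is the case-by-case forcing argument that pins down the lattice, and that work is only promised (``I would enumerate\dots'', ``I expect\dots''), not carried out. Worse, the one concrete prediction you make about the outcome is false. In $\mathcal{A}^{+\sqrt{-1}}$ itself, take $\mathcal{A}_s$ to be the two pencils $\{x=0,\,x=z,\,x=\sqrt{-1}z\}$ and $\{y=0,\,y=z,\,y=\sqrt{-1}z\}$; this is a legitimate output of Lemma \ref{lemma:non-simple} (the nine cross intersections are distinct double points of the six lines), the remaining three lines $x=y$ and the two slanted lines are \emph{not} concurrent, and exactly one of the eight new triples uses two of them, so $(a,b,c)=(7,1,0)$. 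Another admissible choice of $\mathcal{A}_s$ inside the same arrangement realizes $(5,3,0)$. So several of the cases you intend to ``discard'' must in fact survive, and each surviving case must separately be identified with $L(\mathcal{A}^{\pm\sqrt{-1}})$ --- the bookkeeping you flag as the main obstacle is therefore strictly larger than you anticipate, and your discarding criteria (a line with too few triples, a hidden quadruple) cannot eliminate them.

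Note also that the paper takes a more targeted route which avoids this ambiguity of the choice of $\mathcal{A}_s$: it pivots directly on the three lines carrying four triples, shows they cannot be concurrent (otherwise all ten triples lie on them and $\mathcal{A}$ is simple $C_{\leq 3}$), shows at least two of their pairwise intersections must be triple points, and then forces the remaining incidences line by line until the lattice of $\mathcal{A}^{\pm\sqrt{-1}}$ emerges. If you want to salvage your scheme, you would have to either complete the enumeration for every realizable $(a,b,c)$ --- including $(7,1,0)$ and $(5,3,0)$ --- or normalize the choice of sub-arrangement first; as it stands there is a genuine gap.
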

\begin{proof}

Let $a$ be the number of lines that each line passes through 4 triple points and $b$ be the number of lines  that each line passes through 3 triple points. Then $a+b=9$ and $4a+3b=30$. We have that $a=3$ and $b=6$.

 If the three lines with 4 triple points on each of them intersect at  a  triple point, then all the 10 triples should be on them. Consequently, the arrangement is simple $C_{\leq 3}$.
 
Assume that $L_1$, $L_2$ and $L_4$ are the three lines with 4 triple points on each of them and $L_1\cap L_2\cap L_4=\emptyset$.  Then at least two of $\{L_1\cap L_2, L_1\cap L_4, L_2\cap L_4\}$ are triple points. Otherwise there should be at least 11 triple points so that each of lines $L_1$, $L_2$ and $L_4$ will have 4 triple points. So we may assume that  $L_1\cap L_2\cap L_3$ and $L_1\cap L_4\cap L_7$ are triple points. Let $L_4\cap L_5\cap L_6$ be a triple point which is not on $L_1\cup L_2\cup L_3$. 
Then $L_7$ must pass through $L_2\cap L_5$ or $L_2\cap L_6$. Otherwise, $L_2$ will have at most 3 triples. By switching labels of $L_5$ and $L_6$, we may assume that $L_2\cap L_6\cap L_7\neq \emptyset$. Then the two points $Q_{21}$, $Q_{22}$ must be on $L_8\cup L_9$ so that $L_2$ will passes through 4 triple points. We may assume that $Q_{21}\in L_8$ and $Q_{22}\in L_9$. Since the line $L_4$  also passes through 4 triple points, then $Q_{31}$ should be  on $L_9$. Similarly, since the line $L_1$ passes through 4 triple points, then $Q_{13}$ should be on $L_9$ and $Q_{12}$ should be on $L_8$, for that $L_9$ passes through  $Q_{31}$ and $Q_{22}$. Now we have 9 triple points. The last triple point must be $L_3\cap L_7\cap L_8$ so that $L_7$ will passes three triple points. The arrangements with such intersection lattices are noting but $\mathcal{A}^{\pm\sqrt{-1}}$ (see Figure \ref{fig:A+-}).
\end{proof}

\subsection{Classification and applications}
Summarize the above subsections, we have the following theorem:
\begin{thm}
Let $\mathcal{A}$ be an arrangement of nine lines in $\mathbb{CP}^2$. Then $\mathcal{A}$ is in one of the following classes:
\begin{enumerate}
\item[(1)] arrangements whose moduli spaces are irreducible;
\item[(2)] arrangements containing MacLane arrangements (see Example \ref{MacLane});
\item[(3)] Falk-Sturmfels arrangements (see Example \ref{Falk-Sturmfels}).
\item[(4)] arrangements $\mathcal{A}^{\pm\sqrt{-1}}$ (see Example \ref{A+-}).
\end{enumerate}
\end{thm}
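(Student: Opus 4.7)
The plan is to assemble the theorem as a case analysis on $m_\A$, the maximum multiplicity of a multiple point, and in each case appeal to the propositions already proved in this section together with Corollary \ref{cor:exists--a-line-with-2-multiple-points}. Throughout the argument, whenever some line of $\A$ meets at most two multiple points, Corollary \ref{cor:exists--a-line-with-2-multiple-points} immediately places $\A$ in class (1) or (2); so in each case I only need to treat the situation where every line of $\A$ passes through at least three multiple points.

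First, if $m_\A \geq 5$, Proposition \ref{prop:r>=5} directly puts $\A$ in class (1). Second, if $m_\A = 4$ and some line has at most two multiple points, Corollary \ref{cor:exists--a-line-with-2-multiple-points} finishes. Otherwise every line passes through at least three multiple points and $n_4 \geq 1$, so Proposition \ref{prop:r=4} places $\A$ in class (1) or (3). Third, suppose $m_\A = 3$, so all multiple points are triple points. If $\A$ contains a MacLane sub-arrangement we are in class (2); if some line has at most two triple points we are in class (1) by Corollary \ref{cor:exists--a-line-with-2-multiple-points}; and if $\A$ is simple $C_{\leq 3}$ then by Theorem \cite{Nazir2010} $\mathcal{M}_\A$ is irreducible and we are in class (1). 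I may therefore assume every line has at least three triple points, $\A$ contains no MacLane sub-arrangement, and $\A$ is not simple $C_{\leq 3}$. Lemma \ref{lemma:10-triples} then gives $n_3 \leq 10$, while each of the $9$ lines passing through $\geq 3$ triple points forces $3 n_3 \geq 3 \cdot 9$, hence $n_3 \geq 9$.

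This reduces the $m_\A = 3$ case to $n_3 \in \{9, 10\}$. If $n_3 = 9$, the line–point count $3 n_3 = 27 = 3 \cdot 9$ forces every line to contain exactly three triple points, so Proposition \ref{prop:9-3} places $\A$ in class (1). If $n_3 = 10$, then Proposition \ref{prop:r=3} shows $\A$ is lattice isomorphic to $\A^{\pm\sqrt{-1}}$, placing $\A$ in class (4). Collecting the four cases gives the stated classification.

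The part I expect to require the most care is the bookkeeping for $m_\A = 3$: I must be sure that the hypotheses of Propositions \ref{prop:9-3} and \ref{prop:r=3} are exactly what remain after stripping off the MacLane case, the simple $C_{\leq 3}$ case, and the case where some line has only two triple points; and I must verify that $n_3 \in \{9,10\}$ follows cleanly from combining Lemma \ref{lemma:10-triples} with the counting $n_3 \geq 9$ derived from the assumption that every line meets at least three triple points. Once that is checked, the theorem is essentially a bookkeeping consequence of the earlier results, with no new computation required.
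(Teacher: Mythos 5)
Your proposal is correct and follows essentially the same route as the paper, which likewise deduces the theorem from Corollary \ref{cor:exists--a-line-with-2-multiple-points} together with Propositions \ref{prop:r>=5}, \ref{prop:r=4}, \ref{prop:9-3}, \ref{prop:r=3} (your version simply spells out the bookkeeping, including the use of Lemma \ref{lemma:10-triples} and the simple $C_{\leq 3}$ irreducibility theorem). The only thing worth making explicit is the trivial case $m_\mathcal{A}\leq 2$, which your opening remark already covers since then every line meets no multiple points and the Corollary applies.
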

\begin{proof}
The classification simply follows from Corollary \ref{cor:exists--a-line-with-2-multiple-points}, and Propositions \ref{prop:r>=5}, \ref{prop:r=4}, \ref{prop:9-3}, and \ref{prop:r=3}.
\end{proof}

As a application, we obtain the following result which generalized a result of \cite{Garber2003} (Theorem 8.3).

\begin{thm}
The fundamental group of the complement of an arrangement of 9 lines is determined by the intersection lattice.
\end{thm}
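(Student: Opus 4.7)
The plan is to invoke the classification theorem just proved and verify the statement class by class.  In all four cases the underlying engine is Randell's lattice-isotopy theorem \cite{Randell1989}: any path in $\mathcal{M}_{\mathcal{A}}$ yields a smooth family of arrangements with constant lattice whose complements are mutually diffeomorphic, and in particular share the same fundamental group.  Thus for every irreducible (equivalently path-connected) component of $\mathcal{M}_{\mathcal{A}}$ the fundamental group is a well-defined invariant of that component.

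For class (1), where $\mathcal{M}_{\mathcal{A}}$ is itself irreducible, any two realizations of the given lattice lie in the same component, so Randell's theorem immediately gives isomorphic $\pi_1$.  For class (2) (arrangements containing a MacLane sub-arrangement) and class (4) (the arrangements $\mathcal{A}^{\pm\sqrt{-1}}$) the exceptional moduli consists of two points forming a complex-conjugate pair, with coefficients in $\mathbb{Q}(\sqrt{-3})$ and $\mathbb{Q}(\sqrt{-1})$ respectively.  Complex conjugation of $\mathbb{CP}^2$ is an anti-holomorphic self-homeomorphism swapping the two arrangements, hence restricts to a homeomorphism of their complements and forces equality of fundamental groups.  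For class (2) in particular, one combines this observation with Lemma \ref{lemma:extend-irreduciblity}: when a 9-line arrangement $\mathcal{A}$ contains a MacLane and admits a line through at most two multiple points, the fiber-bundle structure of $\mathcal{M}_{\mathcal{A}} \to \mathcal{M}_{\mathcal{A}'}$ respects the conjugation symmetry, so the conjugation on the MacLane base lifts to a conjugation swapping the two components of $\mathcal{M}_{\mathcal{A}}$.

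The main obstacle is class (3), the Falk--Sturmfels arrangements $FS^{\pm}$.  Since the relevant coefficients lie in the real field $\mathbb{Q}(\sqrt{5})$, complex conjugation fixes each $FS^{\pm}$ individually and cannot be used to exchange them.  My proposed route is to construct an explicit diffeomorphism $M(FS^{+}) \to M(FS^{-})$ in the spirit of Section 4 of the paper, where the analogous (and genuinely harder) task is carried out for the 10-line extensions $\mathcal{C}^{\pm}$.  Concretely, I would realize the Galois involution $\sqrt{5}\mapsto -\sqrt{5}$ as a piecewise-smooth self-homeomorphism of $\mathbb{CP}^2$ by matching cell decompositions adapted to the common intersection lattice, using the fact that the combinatorial types of the two real arrangements (as recorded by their wiring diagrams) are equivalent up to a sequence of allowed moves.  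A cleaner but more computational alternative is to extract presentations of $\pi_1(M(FS^{\pm}))$ via braid monodromy and check by Tietze transformations that they define isomorphic groups; this is a finite verification, and either route closes the argument.  With class (3) handled, the theorem follows by combining the four cases of the classification.
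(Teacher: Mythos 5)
Your handling of classes (1), (2) and (4) is essentially the paper's argument: Randell's lattice-isotopy theorem on each connected component, plus the observation that the two components in the MacLane and $\mathcal{A}^{\pm\sqrt{-1}}$ cases are exchanged by complex conjugation, which is a homeomorphism of $\mathbb{CP}^2$ carrying one complement onto the other (the paper packages this as Theorem 3.9 of \cite{Cohen1997}; your minor overstatement that the moduli ``consists of two points'' in class (2) is harmless, since what is really used is that there are at most two components, conjugate to each other). The genuine gap is in class (3). Your primary route --- realizing the Galois involution $\sqrt{5}\mapsto-\sqrt{5}$ as a piecewise-smooth self-homeomorphism of $\mathbb{CP}^2$ by ``matching cell decompositions adapted to the common intersection lattice'' --- is not a valid argument as stated: a matching of cell decompositions compatible with the lattice is exactly the kind of data whose existence is in question, and if such a construction worked from the lattice alone it would show that any two lattice-isomorphic (real) arrangements have homeomorphic complements, which is precisely what the whole Zariski-pair problem denies. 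The auxiliary claim that the wiring diagrams of $FS^{+}$ and $FS^{-}$ are related by allowed moves is asserted, not proved, and your fallback (braid monodromy presentations compared by Tietze transformations) is proposed but never carried out, so class (3) is left open.

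The fix is much simpler than what you propose, and it is already in the paper: the projective automorphism $A\in PGL_3(\mathbb{C})$ displayed in Section 4 (taken from Example 5.2 of \cite{Nazir2010}) sends the nine lines of $FS^{+}$ onto the nine lines of $FS^{-}$ (with a relabeling, e.g.\ $L_1^{+}\mapsto L_3^{-}$, $K_4^{+}\mapsto K_1^{-}$, $H_9^{+}\mapsto H_9^{-}$), hence restricts to a biholomorphism $M(FS^{+})\cong M(FS^{-})$ and in particular gives $\pi_1(M(FS^{+}))\cong\pi_1(M(FS^{-}))$; alternatively one may simply cite Example 5.2 of \cite{Nazir2010} or Section 7.5 of \cite{Cohen1997}, which is what the paper does. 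Note also that, contrary to your remark, the ten-line case treated in Section 4 is not ``harder'' than the nine-line case here --- the nine-line statement is the restriction of that computation, so the tool you pointed at already closes your class (3) without any cell-decomposition or monodromy work.
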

\begin{proof}
If the moduli space is irreducible, then the fundamental group is determined by the lattice according to the Lattice-isotopy theorem.

It follows from  Example 5.2 \cite{Nazir2010}  (see also section 7.5 \cite{Cohen1997} ) that the fundamental groups $\pi_1(M(FS^+))$ and $\pi_1(M(FS^-))$ are isomorphic. Let $\mathcal{A}_1$ and $\mathcal{A}_2$ be two arrangements containing Maclane arrangements. Then either they are in the same connected component of the moduli spaces, or $\mathcal{A}_1$ and the conjugate of $\mathcal{A}_2$ are in the same connected component. By  Theorem 3.9 \cite{Cohen1997}, the fundamental groups of $\mathcal{A}_1$ and $\mathcal{A}_2$ are isomorphism. According to the same theorem, the fundamental groups of $\mathcal{A}^{+\sqrt{-1}}$ and $\mathcal{A}^{-\sqrt{-1}}$ are isomorphic too.
\end{proof}

\section{Arrangements of 10 lines: an example}
We have seen that there is no Zariski pair of arrangements of 9 lines. But we do not know if there is a Zariski pair of arrangements of 10 lines. To get a Zariski pair,  a naive idea is to add lines to those arrangements whose moduli spaces are disconnected. In general, it is very hard to determine if the resulting pair of arrangements is a Zariski pair.  The following example is a trial.
\begin{eg}
Starting from the Falk-Sturmfels arrangements (see Example \ref{Falk-Sturmfels}), we will construct new arrangements of 10 lines such that the moduli space is disconnected.

We define two line arrangements of 10 lines, called extended Falk-Sturmfels arrangement (see Figure \ref{fig:extended-FS}):
\[\widetilde{FS}^{\pm}=\{L_i^{\pm}, K_i^{\pm}, H_9^{\pm}, H_{10}^{\pm}, i=1,2,3,4\}\]
by adding  lines :
\[H_{10}^\pm : y=(\dfrac{1}{\gamma_\pm}-1)x+ z\]
to $FS^\pm$ respectively.

Notice that $\widetilde{FS}^\pm$  are both fiber-type line arrangements according to Theorem 3.12 \cite{Jiang2001} .
\begin{figure}[htbp]
\begin{center}
\begin{tikzpicture}[domain=4:-2, scale=0.7]
\draw plot (\x, 0) node[left]{$K_3^+$};
\draw plot (\x, 1) node[left]{$L_3^+$};
\draw[domain=-2:4] plot (\x, {2/(1+sqrt(5))*\x+1}) node[right]{$L_2^+$};
\draw[domain=-2:4] plot (0, \x) node[above]{$L_1^+$};
\draw[domain=-2:4] plot (1, \x) node[above]{$K_1^+$};
\draw[domain=-2:4] plot ( \x, {2/(1+sqrt(5))*\x}) node[right]{$K_2^+$};
\draw[domain=-2:3] plot (\x, 1-\x) node[right]{$L_4^+$};
\draw[domain=(-5+sqrt(5))/2:4] plot (\x, {(3+sqrt(5))/2-\x})  node[right]{$K_4^+$};
\draw[blue] plot (\x, {(2/(1+sqrt(5))-1)*\x+1})  node[left]{$H_{10}^+$};
\draw (1, -2.7) node[below]{$\tilde{FS}^+$};
\end{tikzpicture}
\hspace{2em}
 \begin{tikzpicture}[domain=-3: 3, scale=0.7]
\draw[domain=3: -3] plot (\x, 0) node[left]{$K_3^-$};
\draw[domain=3: -3] plot (\x, 1) node[left]{$L_3^-$};
\draw[domain=(1-sqrt(5)): -2*(1-sqrt(5))] plot (\x, {2/(1-sqrt(5))*\x+1}) node[below right]{$L_2^-$};
\draw plot (0, \x) node[above right]{$L_1^-$};
\draw plot (1, \x) node[above right]{$K_1^-$};
\draw[domain=(3-3*sqrt(5))/2: (3*sqrt(5)-3)/2] plot ( \x, {2/(1-sqrt(5))*\x}) node[below]{$K_2^-$};
\draw[domain=-2:3] plot (\x, 1-\x) node[right]{$L_4^-$};
\draw[domain=(-3-sqrt(5))/2 :3] plot (\x, {(3-sqrt(5))/2-\x})  node[right]{$K_4^-$};
\draw[blue, domain= -4/(2/(1-sqrt(5))-1)):2/(2/(1-sqrt(5))-1)] plot (\x, {(2/(1-sqrt(5))-1)*\x+1}) node[above]{$H_{10}^-$};
\draw (0, -3.8) node[below]{$\tilde{FS}^-$};
\end{tikzpicture}
   \caption{\label{fig:extended-FS}Extended Falk-Sturmfels arrangement}
   \end{center}
\end{figure}
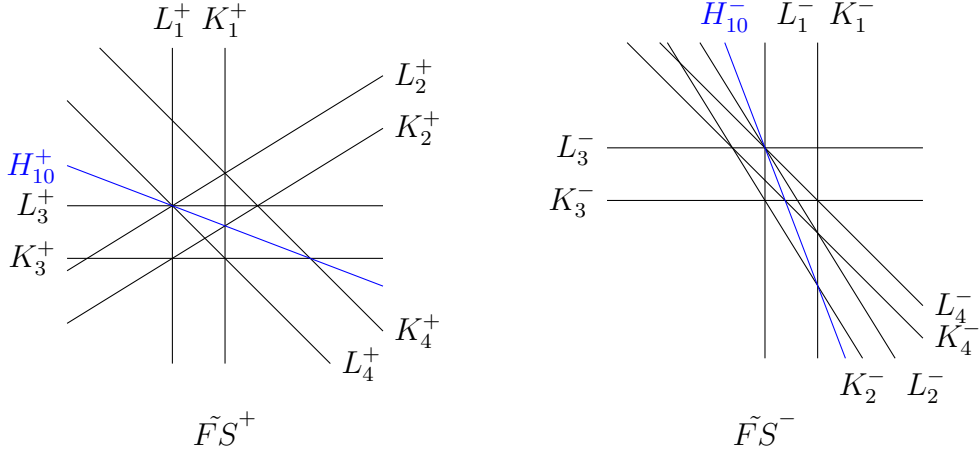

It is not hard to see that $\mathcal{M}_{\widetilde{FS}^\pm}\cong\mathcal{M}_{{FS}^\pm}$. In fact, the line $H_{10}^+$ (respectively, $H_{10}^-$) is always passing through three points of $L(FS^\pm)$: 
$L^+_1\cap L^+_2$, $K^+_1\cap K^+_2$ and $K_3^+\cap K_4^+$ (respectively, $K_2^-\cap K_4^-$, $K_3^-\cap K_4^-$ and $K_1^-\cap K_2^-$).

This pair of arrangements has been studied by Artal, Carmona, Cogolludo and Marco. They show ( \cite{ArtalBartolo2005} Theorem 4.19) that there is no order-preserving homeomorphism between the pairs $(\mathbb{P}^2, \widetilde{FS}^+)$ and  $(\mathbb{P}^2, \widetilde{FS}^-)$. 
 Here we present an explicit diffeomorphism between the complements $M(\widetilde{FS}^+)$ and $M(\widetilde{FS}^-)$. In fact, by Example 5.2 \cite{Nazir2010}, we know that there is an automorphism $A\in PGL(\mathbb{C}^3)$ of $\mathbb{CP}^2$, 
\[A:= \begin{pmatrix}
-\gamma_- & -1 & 0\\
-\gamma_- & 0 & 0\\
\gamma_- & 1 & 1
\end{pmatrix},\]
acting on the right to points $[x , y , z]$ in the projective space $\mathbb{P}^2$ (as matrices product) which sends
\[
\begin{array}{llll}
L_1^+\mapsto L_3^- ,  &   L_2^+\mapsto L_4^- ,   &  L_3^+\mapsto L_2^- , & L_4^+\mapsto L_1^-  \\
K_1^+\mapsto K_3^- ,  &   K_2^+\mapsto K_4^- ,   &  K_3^+\mapsto K_2^- , & K_4^+\mapsto K_1^-  \\
 H_9^+\mapsto H_9^- &  &  & 
\end{array}.
\]

To see that $A$ induces a diffeomorphism between $M(\widetilde{FS}^+)$ and $M(\widetilde{FS}^-)$, it suffices to show that the automorphism $A$ sends $H_{10}^+$ to $H_{10}^-$. 

Recall that $\gamma_{\pm}=\dfrac{1\pm\sqrt{5}}{2}$.  One can check that for any point $P:=[x, (\dfrac{1}{\gamma_+}-1)x+z, z]$ on $H_{10}^+$, the image $P\cdot A$ is a point on $H_{10}^-$. In fact, 
\[
\begin{pmatrix} x & (\dfrac{1}{\gamma_+}-1)x+z & z \end{pmatrix}\cdot A\cdot \begin{pmatrix}\dfrac{1}{\gamma_-}-1 \\ -1 \\ 1 \end{pmatrix}\equiv 0.\]
Therefore, the pair $(\widetilde{FS}^+, \widetilde{FS}^-)$ is not a Zariski pair. 
\end{eg}

From this example, we see that moduli spaces of fiber-type projective line arrangements do not have to be connected.  In fact, we can produce infinitely many fiber-type projective line arrangements whose moduli spaces are disconnected. 
On the other hand, we do not know if fundamental groups of complements of fiber-type projective line arrangements are determined by intersection lattices. 

\bibliographystyle{alpha}

\vspace*{1em}

\end{document}